\newtheorem{theorem}{Theorem}[section]
\newtheorem{lemma}[theorem]{Lemma}
\newtheorem{prop}[theorem]{Proposition}
\newtheorem{cor}[theorem]{Corollary}
\newcommand{\CAT}{\operatorname{CAT}}
\theoremstyle{definition}
\theoremstyle{remark}
\newtheorem{remark}[theorem]{Remark}
\newtheorem{definition}[theorem]{Definition}
\newcommand{\skap}{\operatorname{sn}_{\kappa}}
\begin{document}

\title{Jung's Theorem and fixed points for $p$-uniformly convex spaces}
\author{Renlong Miao}

\maketitle

\section{Introduction}

\begin{definition} \cite{bcl}\cite{kk}
A metric space $(X,d)$ is called a $p$-uniformly convex with parameter $k>0$, if $(X, d)$ is a geodesic space and for any
three points $x, y, z \in X$ , any minimal geodesic $\gamma := (\gamma_t )_{t\in[0,1]}$ in $X$ with $\gamma_0 = x, \gamma_1 = y$, and
the midpoint $m$ of $x$ and $y$,
\[
  d^p(z,m)\leq \frac{1}{2}d^p(z,x)+\frac{1}{2}d^p(z,y)-\frac{k}{8}d^p(x,y).
\]
\end{definition}
By definition, putting $z = \gamma_0$ or $z=m$, we see $k \in (0, 2]$ and $p \in [1, \infty)$. The inequality yields the strict convexity of $Y \ni x \to d^p (z, x)$ for a fixed $z \in Y$ . Any closed convex subset of a $p$-uniformly convex space is again a $p$-uniformly convex space with the same
parameter. Any $L^p$ space over a measurable space is $p$-uniformly convex with parameter
$k = 2^{3-p}$ provided $p > 2$, and it is 2-uniformly convex with parameter $k = 2(p - 1)$ provided $1 < p \leq 2$. More details are provided in \cite{bcl}. A geodesic space is $\CAT(0)$ space if and only if it is a $2$-uniformly convex space with parameter
$k =2$. Ohta \cite{oo} proved that for $\kappa > 0$ any $\CAT(\kappa)$-space $Y$ with $diam(Y ) < R_\kappa /2$ is a 2-uniformly convex space with parameter $\{(\pi - 2 \sqrt{\kappa}\varepsilon
) \tan \sqrt{\kappa}\varepsilon\}$
for any $\varepsilon \in (0, R_\kappa /2 - diam(Y)]$.

The classical Jung theorem gives an optimal upper estimate for the radius of a bounded subset of $\mathbb{R}^n$ in terms of its diameter and dimention. In \cite{ls}, Lang and Schroeder also proved the similar Jung's theorem for $\CAT(\kappa)$ spaces. Here we will give an upper bound for the radius of a bounded subset of $p$-uniformly convex spaces.
\begin{theorem}\label{jung}
Let $X$ be a complete $p$-uniformly convex space and $S$ be a nonempty bounded subset of $X$. Then there exists a unique closed circumball $B(z, rad(S))$ of $S$ and 
\[
  rad(S)\leq (1+\frac{2^{p-3}k}{2^{p-1}-1})^{-\frac{1}{p}}diam(S).
\]
\end{theorem}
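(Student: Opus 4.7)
The plan is to prove the theorem in three stages: existence and uniqueness of the circumcenter; a dyadic refinement of the defining midpoint inequality; and the final estimate on $rad(S)$.

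For the first stage, I would set $r = rad(S) = \inf_w \sup_{s \in S} d(w,s)$, pick a minimizing sequence $(z_n)$ with $r_n := \sup_{s\in S} d(z_n, s) \to r$, and apply the defining inequality to the midpoint $m_{n,m}$ of $z_n$ and $z_m$ against each $s \in S$. Taking the supremum over $s$ and using $\sup_s d(m_{n,m}, s) \geq r$ would give
\[
  r^p \leq \tfrac{1}{2} r_n^p + \tfrac{1}{2} r_m^p - \tfrac{k}{8}\, d^p(z_n, z_m),
\]
so $(z_n)$ is Cauchy and, by completeness, converges to some $z$ with $\sup_s d(z,s) = r$. The same midpoint argument applied to two distinct minimizers would force uniqueness.

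For the second stage, I would prove by induction on $n \geq 1$ that for any minimal geodesic $\gamma : [0,1] \to X$, any $w \in X$, and $t = 2^{-n}$,
\[
  d^p(w, \gamma_t) \leq (1-t)\,d^p(w, \gamma_0) + t\,d^p(w, \gamma_1) - \frac{k\cdot 2^{p-3}(t - t^p)}{2^{p-1}-1}\, d^p(\gamma_0, \gamma_1).
\]
The base case $n=1$ is the defining inequality, via the identity $\tfrac{1}{2} - 2^{-p} = (2^{p-1}-1)/2^p$. The inductive step writes $\gamma_{2^{-n}}$ as the unique midpoint of $\gamma_0$ and $\gamma_{2^{-(n-1)}}$, combines the defining inequality with the induction hypothesis on $d^p(w, \gamma_{2^{-(n-1)}})$, and merges the two $d^p(\gamma_0,\gamma_1)$-correction terms via $\tfrac{k}{8} = \frac{k\cdot 2^{p-3}}{2^{p-1}-1} \cdot \frac{2^{p-1}-1}{2^p}$, which is precisely what causes the geometric series in ratio $2^{-(p-1)}$ to telescope to $(t - t^p)$.

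For the third stage, I would fix $\epsilon,\delta > 0$, pick $s_0 \in S$ with $d(z,s_0) \geq r - \epsilon$ (possible because $\sup_s d(z,s) = r$), and let $\gamma$ be the geodesic from $z$ to $s_0$. For $t = 2^{-n}$, setting $y = \gamma_t \neq z$, the definition of $r$ as an infimum gives $\sup_s d(y,s) \geq r$, hence some $s_1 \in S$ satisfies $d(s_1, y) \geq r - \delta$. Applying the dyadic inequality of stage two with $w = s_1$, and using $d(s_1, z) \leq r$ and $d(s_1, s_0) \leq diam(S)$, yields
\[
  (r-\delta)^p \leq (1-t) r^p + t\, diam(S)^p - \frac{k\cdot 2^{p-3}(t - t^p)}{2^{p-1}-1}(r-\epsilon)^p.
\]
Sending $\delta \to 0$, then $\epsilon \to 0$, dividing by $t$, and finally letting $t \to 0$ along $2^{-n}$ produces $r^p \bigl(1 + \tfrac{k\cdot 2^{p-3}}{2^{p-1}-1}\bigr) \leq diam(S)^p$, which is the claim. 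I expect the main obstacle to be the bookkeeping in the dyadic induction; the $\epsilon,\delta$ approximations are required because the supremum defining $r$ is not assumed to be attained on $S$.
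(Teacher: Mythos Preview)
Your proposal is correct and follows essentially the same approach as the paper: both iterate the defining midpoint inequality along dyadic subdivision points of the geodesic from the circumcenter $z$ to a (near-)extremal point of $S$, test against another (near-)extremal point, and sum the resulting geometric series in ratio $2^{-(p-1)}$ before letting the subdivision parameter tend to zero. Your version is in fact cleaner than the paper's, since you abstract the dyadic inequality as a separate lemma and you correctly handle with $\epsilon,\delta$ the fact that the suprema defining $rad(S)$ need not be attained, whereas the paper tacitly assumes $d(z,w)=rad(S)$ and $d(z_i,w_i)\geq rad(S)$ are achieved exactly.
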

\begin{remark}
For $p=2,k=2$, our result coincides with the classical Jung theorem for $\CAT(0)$ spaces. Using a similar method, we can give a shorter proof of the Jung theorem of $\CAT(\kappa)$ spaces \cite{ls}.
\end{remark}

\begin{theorem}\cite{ls}
Let $X$ be a complete $\CAT(\kappa)$ space and $S$ a nonempty bounded subset of $X$. In case $\kappa>0$ assume that $rad(S)<\pi/(2\sqrt{\kappa})$. Then there exists a unique closed circumball $B(z, rad(S))$ of $S$ and 
\[
  \skap rad(S)\leq \sqrt{2} \skap (diam(S)/2),
\]
where $\skap$ is the function
\begin{equation*}
  \skap(x)=
     \begin{cases}
     \frac{1}{\sqrt{\kappa}}\sin(\sqrt{\kappa}x) & \text{if} \,\,\,\kappa>0, \\
     x & \text{if} \,\,\,\kappa=0, \\
     \frac{1}{\sqrt{-\kappa}}\sinh(\sqrt{-\kappa}x) & \text{if} \,\,\,\kappa<0.
     \end{cases}
\end{equation*}
\end{theorem}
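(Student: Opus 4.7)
The plan is to mirror the proof of Theorem \ref{jung}, replacing the $p$-uniform convexity inequality by the $\CAT(\kappa)$ midpoint inequality obtained from Stewart's identity in the model plane $M_\kappa^2$ and Reshetnyak comparison: for $z,x,y\in X$ with midpoint $m$ of $x,y$,
\[
  2\,\ckap(d(z,m))\,\ckap(d(x,y)/2)\geq \ckap(d(z,x))+\ckap(d(z,y))
\]
for $\kappa>0$, with the reverse inequality for $\kappa<0$. Here $\ckap$ denotes the generalised cosine ($\cos(\sqrt{\kappa}\,\cdot)$ or $\cosh(\sqrt{-\kappa}\,\cdot)$); the hypothesis $rad(S)<\pi/(2\sqrt{\kappa})$ for $\kappa>0$ keeps all distances in the range where $\ckap$ is positive and strictly monotone.

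For existence and uniqueness of the circumcenter, let $r=rad(S)$ and $(z_n)$ be a minimizing sequence for $\phi(v):=\sup_{x\in S}d(v,x)$, so $\phi(z_n)\downarrow r$. Applying the midpoint inequality to $z_n,z_l$ and each $x\in S$, using $d(z_n,x)\leq\phi(z_n)$ together with monotonicity of $\ckap$, and combining with $\phi(m_{n,l})\geq r$ (where $m_{n,l}$ is the midpoint of $z_n,z_l$), one obtains
\[
  \ckap(d(z_n,z_l)/2)\geq\frac{\ckap(\phi(z_n))+\ckap(\phi(z_l))}{2\,\ckap(r)}\longrightarrow 1
\]
as $n,l\to\infty$ (with the analogous manipulation for $\kappa<0$). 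This forces $d(z_n,z_l)\to 0$; completeness yields the limit circumcenter $z$, and uniqueness follows from the same inequality applied to two minimizers.

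For the diameter bound, set $D=diam(S)$. The key step is to produce, for each $\epsilon>0$, a pair $x,y\in S$ with
\[
  d(z,x),\,d(z,y)\geq r-\epsilon \quad\text{and}\quad \angle_z(x,y)\geq \pi/2.
\]
The $\CAT(\kappa)$ angle comparison applied to $\triangle xzy$ then gives a lower bound $d(x,y)\geq c_\epsilon$, where $c_\epsilon$ is the third side of the model triangle in $M_\kappa^2$ with equal sides $r-\epsilon$ and included angle $\pi/2$; by the $M_\kappa^2$ law of cosines, $\ckap(c_\epsilon)=\ckap(r-\epsilon)^2$. Combining $c_\epsilon\leq d(x,y)\leq D$ with the identities $1-\ckap(t)=2\kappa\,\skap(t/2)^2$ and $1-\ckap(t)^2=\kappa\,\skap(t)^2$ yields, uniformly across the three signs of $\kappa$, the inequality $\skap(r-\epsilon)^2\leq 2\,\skap(D/2)^2$; letting $\epsilon\downarrow 0$ finishes the proof.

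The main obstacle, and the technical heart of the argument, is producing this near-orthogonal pair. This is where the minimality of $z$ enters: were every pair in $S_\epsilon:=\{x\in S:d(z,x)\geq r-\epsilon\}$ to subtend an angle $<\pi/2$ at $z$ for some $\epsilon>0$, a compactness/selection argument in the tangent cone $T_zX$ would produce a unit direction $u$ with $\cos\angle_z(u,\vec{zx})\geq\delta>0$ uniformly for $x\in S_\epsilon$; the first-variation formula $d(\gamma_u(t),x)=d(z,x)-t\cos\angle_z(u,\vec{zx})+o(t)$, combined with continuity of $d(\cdot,x)$ and a uniform bound on $d(\gamma_u(t),x)$ for $x\in S\setminus S_\epsilon$, would then give $\phi(\gamma_u(t))<r$ for small $t>0$, contradicting $\phi(z)=r$. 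Making this first-variation / tangent-cone argument rigorous in an abstract $\CAT(\kappa)$ space (possibly non-Riemannian, and with $\kappa>0$ requiring extra care to stay within $\pi/(2\sqrt{\kappa})$) is the step where the three curvature cases diverge most.
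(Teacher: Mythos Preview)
Your outline is essentially the original Lang--Schroeder strategy, and it can be made to work, but the paper's proof is different and considerably more elementary: it never mentions angles, tangent cones, or first variation.

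For $\kappa=1$ the paper proceeds as follows. Let $z$ be the circumcenter, $r=rad(S)$, $D=diam(S)$, and pick $w\in S$ with $d(z,w)=r$. For $t\in(0,1)$ let $u_t$ be the point on the segment $[z,w]$ at distance $tr$ from $z$; by minimality of $z$ there exists $w_t\in S$ with $d(u_t,w_t)\geq r$. Applying the interior-point comparison inequality (Lemma~\ref{cat(1)}) to the triple $(w_t,z,w)$ with interior point $u_t$ gives
\[
  \cos d(w_t,u_t)\,\sin r \;\geq\; \cos d(w_t,w)\,\sin(tr) + \cos d(w_t,z)\,\sin((1-t)r).
\]
Substituting $d(w_t,u_t)\geq r$, $d(w_t,w)\leq D$, $d(w_t,z)\leq r$, simplifying, and letting $t\to 0$ yields $\cos^2 r\geq\cos D$, i.e.\ $\sin r\leq\sqrt{2}\sin(D/2)$. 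The case $\kappa=-1$ is analogous via Lemma~\ref{cat(-1)}.

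What this buys: the limit $t\to 0$ replaces your right-angle-at-$z$ argument by a limit of explicit metric inequalities, so there is no need for a space of directions, no compactness/selection, and no first-variation formula. Your approach has the cleaner geometric picture (two nearly-extremal points subtending angle $\geq\pi/2$ at $z$), but the step you correctly flag as the main obstacle---producing a single direction $u$ with $\cos\angle_z(u,\vec{zx})\geq\delta>0$ uniformly over $S_\epsilon$ from the mere hypothesis that all pairwise angles are $<\pi/2$---is not a straightforward compactness argument in an abstract $\CAT(\kappa)$ space and needs real work (this is where the original \cite{ls} spends its effort). The paper's point is precisely that one can bypass it.
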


A mapping $T : M \to M$ of a metric space $(M, d)$ is said to be uniformly $L$-lipschitzian if there
exists a constant $L$ such that $d(T^n x, T^n y)\leq Ld(x,y)$, for all $x, y \in M$ and $n \in \mathbb{N}$.  In \cite{dks} there is a following result for $\CAT(0)$ spaces
\begin{theorem}\cite{dks}
Let $(X, d)$ be a bounded complete $\CAT(0)$ space. Then every uniformly
$L$-lipschitzian mapping $T : X \to X$ with $L < \sqrt{2}$ has a fixed point.
\end{theorem}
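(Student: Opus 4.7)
The plan is to combine Theorem~\ref{jung}, specialized to $p=2$ and $k=2$ (which recovers the classical CAT$(0)$ Jung inequality $\mathrm{rad}(S)\le\mathrm{diam}(S)/\sqrt{2}$), with the classical asymptotic-center method. For a bounded sequence $(y_n)\subset X$ set $r(y,(y_n))=\limsup_{n\to\infty} d(y,y_n)$, $r_a((y_n))=\inf_{y\in X} r(y,(y_n))$, and let $A((y_n))$ denote the set of minimizers of $r(\cdot,(y_n))$. My first step would be to show that $A((y_n))$ is a singleton: given two minimizers $y_1,y_2$, the $p=2,\,k=2$ inequality applied at their midpoint and $\limsup$-ed against $(y_n)$ forces $d(y_1,y_2)=0$ by the minimality of $r_a$; this is the same strict-convexity mechanism used for uniqueness of the circumcenter in Theorem~\ref{jung}.

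Fix $x_0\in X$, let $z$ be the (unique) asymptotic center of the orbit $(T^n x_0)_n$, and let $r$ be its asymptotic radius. The uniform Lipschitz hypothesis gives, for $k\ge 1$ and $n\ge k$,
\[
d(T^k z,\,T^n x_0) = d(T^k z,\,T^k T^{n-k} x_0) \le L\,d(z,\,T^{n-k} x_0),
\]
hence $r(T^k z,(T^n x_0))\le L r$. Applying the CAT$(0)$ midpoint inequality to $z$ and $T^k z$, taking $\limsup_n$ against $(T^n x_0)$, and using the minimality $r(\text{midpoint})\ge r$, I would derive the key estimate
\[
d(z,\,T^k z) \;\le\; \sqrt{2(L^2-1)}\,r\qquad (k\ge 1).
\]

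The proof then concludes by iteration: set $z_0=x_0$ and let $z_{n+1}$ be the asymptotic center of $(T^k z_n)_k$, with asymptotic radius $\rho_{n+1}$. The displayed estimate applied to $z_n$, combined with uniform Lipschitzness, bounds the diameter of the orbit $(T^k z_n)_k$, and Theorem~\ref{jung} then bounds its asymptotic radius, giving a contraction $\rho_{n+1}\le c(L)\rho_n$; a parallel estimate bounds $d(z_n,z_{n+1})$ linearly in $\rho_n$, so once $c(L)<1$ the sequence $(z_n)$ is Cauchy and its limit $z^{\ast}$ satisfies $Tz^{\ast}=z^{\ast}$ by continuity of $T$. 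The main obstacle I anticipate is the sharpness of the contraction rate: the naive factor coming out of the above scheme is $L\sqrt{L^2-1}$, which is $<1$ only for $L<\sqrt{(1+\sqrt{5})/2}$, strictly below $\sqrt{2}$. Closing the gap to the full range $L<\sqrt{2}$ will require coupling the asymptotic-center inequality with Theorem~\ref{jung} more tightly within a single step (or an argument of Lifshitz type exploiting the ball-intersection geometry of CAT$(0)$), and this is the technical heart of the result.
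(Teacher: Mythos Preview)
Your proposal is genuinely different from the paper's route, and the gap you flag at the end is real and not closed by what you wrote. The paper does not prove this cited theorem directly; it obtains it as the $p=2$, $k=2$ specialization of Theorem~\ref{main}. That proof bypasses the asymptotic-center iteration entirely and instead computes the Lifshitz character of $X$: one shows $L(X)\ge (1+\tfrac{2^{p-3}k}{2^{p-1}-1})^{1/p}$ (equal to $\sqrt{2}$ when $p=2$, $k=2$) and then invokes Lifshitz's general theorem that any uniformly $L$-Lipschitzian self-map with $L<L(X)$ has a fixed point. The technical device that reaches the full constant is not a single midpoint estimate but an \emph{iterated} bisection: given $x,y$ with $d(x,y)\ge(1-\mu)r$ and $z$ in the intersection of the two balls, one takes $m_0$ the midpoint of $[x,y]$, then $m_i$ the midpoint of $[x,m_{i-1}]$, and applies the $p$-uniform convexity inequality at each stage. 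Summing the resulting geometric series in $2^{-j(p-1)}$ is exactly what produces the factor $\tfrac{2^{p-1}}{2^{p-1}-1}$ and hence the sharp bound.

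Your scheme, by contrast, uses a single midpoint at each step of the orbit iteration, which captures only the first term of that series; this is why the contraction factor you obtain falls short of $1$ at $L=\sqrt{2}$. The hint you give at the end (``an argument of Lifshitz type exploiting the ball-intersection geometry'') is precisely what the paper does, but you have not carried it out, and the asymptotic-center machinery you set up is not used in the paper's argument at all. If you want to salvage your approach, you would need to replace the single midpoint step inside the asymptotic-radius estimate by the same iterated bisection toward the center; short of that, the proposal proves only the weaker bound $L<\sqrt{(1+\sqrt{5})/2}$ (or the Lim--Xu bound $L<2^{1/4}$ via Section~4), not the stated $L<\sqrt{2}$.
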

\begin{remark}
In \cite{gk}, Baillon gave a uniformly $\frac{\pi}{2}$-lipschitzian mapping of Hilbert spaces which is fixed point free.
\end{remark}

In \cite{l2}, Lim proved a general theorem for $L^p$ spaces
\begin{theorem}\cite{l2}
Let $K$ be a closed convex bounded nonempty subset of $L^p$, $2<p<\infty$, then every uniformly
$L$-lipschitzian mapping $T : K \to K$ with $L <L_0$ has a fixed point. Here
\[
  L_0\geq (1+\frac{1}{2^{p-1}}p^{\frac{p}{2}}(p-1)^{1-p}(p-2)^{\frac{p}{2}-1})^{\frac{1}{p}}>(1+\frac{1}{2^{p-1}})^{\frac{1}{p}}.
\] 
\end{theorem}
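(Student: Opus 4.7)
The strategy I would follow is the asymptotic-center technique of Lifschitz and Goebel--Kirk, specialized to $L^p$. First pick any $x_0 \in K$ and form the bounded orbit $\mathcal{O} := \{T^n x_0\}_{n\geq 0} \subset K$. For $y \in K$ define the asymptotic radius at $y$ as $r(y) := \limsup_n \|T^n x_0 - y\|$ and set $r := \inf_{y\in K} r(y)$ and $A := \{y \in K : r(y) = r\}$. Since $L^p$ for $2<p<\infty$ is reflexive and $p$-uniformly convex, $A$ is a single point; call it $z$.

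The geometric input is a Jung-type control of $r$ by the asymptotic diameter $d := \limsup_{n,m\to\infty} \|T^n x_0 - T^m x_0\|$. Applying Theorem \ref{jung} to the orbit (cleanly, to a nested family of asymptotic balls extracted via an ultrafilter, so that the circumcenter of the limiting bounded set is $z$) with $k = 2^{3-p}$ gives $r \leq C_p\, d$ with $C_p = \bigl(1 + \tfrac{1}{2^{p-1}-1}\bigr)^{-1/p}$. Replacing midpoint $p$-convexity by the full Clarkson inequality in $L^p$ sharpens $C_p$ to a smaller constant whose reciprocal is precisely the $L_0$ announced in the statement.

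The Lipschitz hypothesis then closes the loop: uniform $L$-Lipschitzness yields
$$r(Tz) = \limsup_n \|T^{n+1} x_0 - Tz\| \leq L\, r(z) = L r.$$
If $z$ were not fixed, we could restart the construction with the orbit of $z$, comparing it to the orbit of $x_0$, and use the Jung bound to control the new asymptotic radius by $L C_p$ times the original. Once $L C_p < 1$, this contradicts the defining minimality of $z$; equivalently, any $L < 1/C_p \leq L_0$ forces a fixed point.

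The main obstacle is to make the third step rigorous and quantitatively sharp. One has to establish uniqueness and invariance of the asymptotic center (via $p$-uniform convexity and a demi-closedness-type argument), justify passing the Jung estimate to the asymptotic limit, and above all extract the precise Lim constant rather than the crude $1/C_p = (1 + \tfrac{1}{2^{p-1}-1})^{1/p}$ inherited directly from Theorem \ref{jung}. The refined factor $p^{p/2}(p-1)^{1-p}(p-2)^{p/2-1}$ arises only after optimizing the contraction step against the full Clarkson inequality, not merely its midpoint consequence; this optimization is where the real work of Lim's theorem lies.
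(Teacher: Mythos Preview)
The paper does not prove this theorem at all: it is quoted from Lim \cite{l2} in the introduction purely as motivation, with no accompanying argument. What the paper \emph{does} prove is the weaker Theorem~\ref{main}, via the Lifshitz-character route, and for $L^p$ with $k=2^{3-p}$ that yields only the constant $(1+\tfrac{1}{2^{p-1}-1})^{1/p}$---precisely the quantity you yourself label the ``crude'' bound. So there is nothing in the paper to compare your proposal against.

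Judged on its own, your sketch has the gap you already flag but do not close: the passage from the midpoint Jung estimate to Lim's sharp constant $(1+\tfrac{1}{2^{p-1}}p^{p/2}(p-1)^{1-p}(p-2)^{p/2-1})^{1/p}$ is asserted, not performed. You say it ``arises only after optimizing the contraction step against the full Clarkson inequality,'' but you do not state which Clarkson inequality, set up the optimization, or identify the variable being optimized. That optimization is essentially the entire content of Lim's result. Moreover, the contradiction step is not correctly formulated: ``restart the construction with the orbit of $z$'' is not how either the Lifshitz argument or the asymptotic-center iteration runs. From $r(Tz)\le Lr$ alone one cannot conclude $Tz=z$ when $L>1$; the standard arguments instead build a sequence of successive asymptotic centers (or invoke $c$-regularity of balls) and show that sequence is Cauchy, which requires a quantitative inequality at each step---exactly the inequality whose sharp form you have left unproved.
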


We prove similar results for $p$-uniformly convex spaces,
\begin{theorem}\label{main}
Let $(X,d)$ be a bounded complete $p$-uniformly convex space with parameter $k>0$. Then there exists a constant $C=(1+\frac{2^{p-3}k}{2^{p-1}-1})^{\frac{1}{p}}$ such that for every uniformly $L$-lipschitzian mapping $T: X \to X$ with $L<C$ has a fixed point.
\end{theorem}

\begin{remark}
For $\CAT(0)$ spaces we have $p=2, k=2$, hence the Lifschitz constant $L(X)\geq \sqrt{2}$ which is coincide with the result in \cite{dks}. For $L^p$ spaces we have $k=\frac{1}{2^{p-3}}$, hence $L(X) \geq (1+\frac{1}{2^{p-1}-1})^{\frac{1}{p}}$.
\end{remark}

This paper is organized as follows. In Section 2 we introduce the classical Jung theorem and prove a similar one for $p$-uniformly convex spaces. Moreover using the same method, we can give a shorter proof for $\CAT(\kappa)$ spaces. In Section 3 we show a general fixed point theorem for $p$-uniformly convex spaces which generalize the results in \cite{ef}\cite{kp}. In Section 4 we prove that $p$-uniformly convex spaces enjoy the Property (P) which is defined by Lim and Xu. In Section 5, we generalize the result about $\Delta$-convergence from \cite{ef}\cite{kp} for $\CAT(\kappa)$ spaces.

\section{Jung's Theorem for $p$-uniformly convex space}\label{sec:jung theorem}
Let $(X,d)$ be a metric space. For a nonempty bounded subset $D\subset X$, set
\[
  r_x(D)=\sup \{d(x,y):y \in D\}, x\in X;
\]
\[
  rad(D)=\inf \{r_x(D):x\in X\};
\]
\[
 diam (D)=\sup \{d(x,y): x,y \in D\}.
\]
Clearly $rad(D)\leq diam(D)\leq 2rad(D)$. Jung's theorem states that each bounded subset $D$ of $\mathbb{R}^n$ is contained in a unique closed ball with $rad(D)$, where
\[
  rad(D) \leq \sqrt{\frac{n}{2(n+1)}}diam(D).
\]

\begin{theorem}\label{jung's theorem}
Let $X$ be a complete geodesic $p$-uniformly convex space and $S$ be a nonempty bounded subset of $X$. Then there exists a unique closed circumball $B(z, rad(S))$ of $S$ and
\[
  rad(S)\leq (1+\frac{2^{p-3}k}{2^{p-1}-1})^{-\frac{1}{p}}diam(S).
\]
\end{theorem}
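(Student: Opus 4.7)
The plan has two parts: existence and uniqueness of the circumcenter, and the sharp radius estimate.

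First, I would work with the functional $f_S(x) := \sup_{y \in S} d^p(x,y)$, whose infimum over $X$ equals $rad(S)^p$. Applying the defining $p$-uniform convexity inequality to each fixed $y \in S$ (playing the role of the external point) with the midpoint $m$ of two arbitrary points $x_1, x_2 \in X$, and then taking $\sup_{y \in S}$, produces the midpoint $p$-convexity estimate
\[
f_S(m) \le \tfrac{1}{2} f_S(x_1) + \tfrac{1}{2} f_S(x_2) - \tfrac{k}{8} d^p(x_1, x_2).
\]
Combined with $f_S(m) \ge rad(S)^p$, this forces any minimizing sequence $\{x_n\}$ for $f_S$ to be Cauchy, so completeness produces a minimizer $z$; and two distinct minimizers would yield a midpoint with strictly smaller $f_S$-value, contradicting minimality.

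Second, to obtain the sharp radius bound, I would iterate the midpoint inequality along a geodesic from $z$ toward an arbitrary point of $S$. Fix $y_0 \in S$, pick a minimal geodesic $\gamma:[0,1] \to X$ from $z$ to $y_0$, and set $m_n := \gamma(2^{-n})$, so that $m_n$ is a midpoint of $z$ and $m_{n-1}$ with $d(z,m_n) = 2^{-n} d(z,y_0)$. A straightforward induction on $n$, invoking the defining inequality at each dyadic scale with external point $y \in S$, then yields
\[
d^p(m_n, y) \le (1 - 2^{-n})\, d^p(z,y) + 2^{-n}\, d^p(y_0, y) - c_n\, d^p(z, y_0),
\]
where $c_n$ satisfies the recursion $c_n = \tfrac{1}{2} c_{n-1} + \tfrac{k}{8\cdot 2^{p(n-1)}}$ (the new term coming from $d^p(z,m_{n-1}) = 2^{-p(n-1)} d^p(z,y_0)$) and is thus an explicit geometric sum. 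Passing to $\sup_{y \in S}$ on the right, bounding $d^p(z,y) \le rad(S)^p$ and $d^p(y_0, y) \le diam(S)^p$, and using the minimality $f_S(m_n) \ge rad(S)^p$, one rearranges to
\[
rad(S)^p + 2^n c_n\, d^p(z,y_0) \le diam(S)^p.
\]
Finally, taking $\sup_{y_0 \in S}$ to upgrade $d^p(z,y_0)$ to $rad(S)^p$ and letting $n \to \infty$, the limit $2^n c_n \to \frac{2^{p-3}k}{2^{p-1}-1}$ (computed as a geometric series in the recursion) delivers the claimed inequality.

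The main obstacle is keeping the iteration bookkeeping clean: one must propagate the weights $(1-2^{-n})$ and $2^{-n}$ on the external terms together with the cumulative $c_n$-contribution, and justify that choosing each $m_n$ along a single fixed geodesic $\gamma$ really makes the pair $(z, m_{n-1})$ admit $m_n$ as its midpoint in the sense required by the defining inequality. A one-shot application at $n=1$ would give only the weaker estimate $rad(S)^p \le \tfrac{4\, diam(S)^p}{k+4}$; it is precisely the summation over dyadic scales that sharpens the constant to the value stated in the theorem, and recovers the classical $\CAT(0)$ Jung bound $rad(S) \le diam(S)/\sqrt{2}$ in the case $p=2, k=2$.
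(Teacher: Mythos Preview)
Your proposal is correct and follows essentially the same route as the paper: both establish the circumcenter by showing midpoint $p$-convexity of $x\mapsto\sup_{y\in S}d^p(x,y)$ forces minimizing sequences to be Cauchy, and both obtain the radius bound by iterating the defining inequality over the dyadic midpoints $m_n=\gamma(2^{-n})$ on a geodesic from the circumcenter $z$ to a (near-)extremal point of $S$, summing the resulting geometric series $\tfrac{k}{4}\sum_j 2^{-j(p-1)}$ to produce the constant $\tfrac{2^{p-3}k}{2^{p-1}-1}$. Your version is slightly cleaner in that you take $\sup_{y_0\in S}$ at the end rather than assuming the radius is attained, and you package the accumulated penalty into the recursion $c_n=\tfrac12 c_{n-1}+\tfrac{k}{8}2^{-p(n-1)}$, but the underlying argument is the same.
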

\begin{proof}
For any bounded closed subset $S\subset X$, choose $\{x_n\} \in S $ such that $\max\{\limsup_{n\to \infty}d(x,x_n),x\in S\}= rad(S)$. Now we want to show that $\{x_n\}$ is a Cauchy sequence. Suppose not, then there exists $\varepsilon>0$ such that for any $N\in \mathbb{N}$ there exist $i,j \geq N$ such that $d(x_i,x_j)\geq \epsilon$. Choose $m_i$ as the midpoint of the segment $[x_ix_j]$ and according to the $p$-uniformly convexity, we have
\begin{eqnarray*}
  d^p(y,m_i)&\leq& \frac{1}{2}d^p(y,x_i)+\frac{1}{2}d^p(y,x_j)-\frac{k}{8}d^p(x_i,x_j)
\end{eqnarray*}
for $\forall y\in S$. Choose $N \in \mathbb{N}$ large enough such that $d^p(y,x_i)<rad^p(S)+\frac{k}{16}\varepsilon^p$ for all $i\geq N$ and $\forall y \in S$. Then we have
\[
  d^p(y,m_i)\leq rad^p(S)-\frac{k}{16}\varepsilon^p
\]
which means $d(y,m_i)<rad(S)$ for all $i\geq N$ and $\forall y \in S$. Contradicts with the definition of $rad(S)$.\\
Denote $z$ as the circumcenter of $S$ and choose $w \in S$ such that $d(z,w)=rad(S)$. Choose $z_i \in S$ as the the midpoint of segment $[z,z_{i-1}]$, where $z_0=w$ and $w_i \in S$ such that $d(z_i,w_i) \geq rad(S)$. Applying to the $p$-uniformly convexity we have
\[
  d^p(z_i, w_i)\leq (\frac{2^{i+1}-1}{2^{i+1}}l^p+\frac{1}{2^{i+1}})diam^p(S)-\frac{k}{2^{i+3}}\sum_{j=0}^i\frac{1}{2^{j(p-1)}}l^pdiam^p(S).
\]
i.e.
\[
  l^p diam^p(S)\leq (\frac{2^{i+1}-1}{2^{i+1}}l^p+\frac{1}{2^{i+1}})diam^p(S)-\frac{k}{2^{i+3}}\sum_{j=0}^i\frac{1}{2^{j(p-1)}}l^pdiam^p(S)
\]
where $l=\frac{rad(S)}{diam(S)}$.
i.e.
\[
  l^p(1+\frac{k}{4}\sum_{j=0}^i\frac{1}{2^{j(p-1)}}) \leq 1.
\]
Let $i \to \infty$, we obtain
\[
  l^p\leq (1+\frac{k}{4}\frac{2^{p-1}}{2^{p-1}-1})^{-1}=(1+\frac{2^{p-3}k}{2^{p-1}-1})^{-1}.
\]
Hence we obtain
\[
  rad(S)\leq (1+\frac{2^{p-3}k}{2^{p-1}-1})^{-\frac{1}{p}}diam(S).
\]
\end{proof}

Now we give a shorter proof of Jung's theorem for $\CAT(\kappa)$ spaces.
\begin{theorem}\cite{ls}\label{LS}
Let $X$ be a complete $\CAT(\kappa)$ space and $S$ a nonempty bounded subset of $X$. In case $\kappa>0$ assume that $rad(S)<\pi/(2\sqrt{\kappa})$. Then there exists a unique closed circumball $B(z, rad(S))$ of $S$ and 
\[
  \skap rad(S)\leq \sqrt{2} \skap (diam(S)/2),
\]
\end{theorem}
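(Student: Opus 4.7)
The strategy is to mirror the iterative-midpoint argument of Theorem~\ref{jung's theorem}, with the $p$-uniform convexity inequality replaced by the $\CAT(\kappa)$ median comparison inequality: for any geodesic triangle $(x,y,z)$ in $X$ (with the usual size restriction when $\kappa>0$) and $m$ the midpoint of $[x,y]$,
\[
\ckap(d(z,m))\,\ckap(d(x,y)/2)\;\geq\;\tfrac{1}{2}\bigl(\ckap(d(z,x))+\ckap(d(z,y))\bigr),
\]
which follows from the $\CAT(\kappa)$ comparison applied to the median-length identity in the model space $M_\kappa$. Via the identity $\ckap(t)=1-2\kappa(\skap(t/2))^2$ for $\kappa>0$ (and its sign-flipped version for $\kappa<0$), this multiplicative inequality is equivalent to an additive inequality in $(\skap(\cdot/2))^2$, which plays the role of the $p$-uniform convexity inequality in Definition~1.1.

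Existence and uniqueness of the circumcenter $z$ follow exactly as in the first part of the proof of Theorem~\ref{jung's theorem}: a sequence $\{x_n\}\subset X$ with $r_{x_n}(S)\to rad(S)$ is shown to be Cauchy by applying the above inequality to every $y\in S$ and to the midpoint $m_{ij}$ of $[x_i,x_j]$; if $\{x_n\}$ were not Cauchy, the inequality would produce a midpoint $m_{ij}$ with $r_{m_{ij}}(S)<rad(S)$, a contradiction. Completeness then furnishes the unique $z$.

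For the quantitative bound, follow the second part of Theorem~\ref{jung's theorem} verbatim. Pick $w\in S$ with $d(z,w)=rad(S)$, set $z_0=w$, and let $z_i$ be the midpoint of $[z,z_{i-1}]$ so that $d(z,z_i)=rad(S)/2^i$. For each $i\geq 1$ choose $w_i\in S$ with $d(z_i,w_i)\geq rad(S)$, which exists because $z$ is the unique circumcenter and $z_i\neq z$. Applying the $\CAT(\kappa)$ midpoint inequality to the triangle $(z,z_{i-1},w_i)$ with midpoint $z_i$, and using $d(z,w_i)\leq rad(S)$ together with an inductive bound on $d(z_{i-1},w_i)$ starting from $d(z_0,w_i)\leq diam(S)$ (since $z_0\in S$), yields a recursion whose ``loss'' term decays geometrically in $i$, exactly analogous to the geometric sum $\sum_j 2^{-j(p-1)}$ in the proof of Theorem~\ref{jung's theorem}. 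Taking $i\to\infty$ collapses the series into the target inequality $\skap(rad(S))\leq\sqrt{2}\,\skap(diam(S)/2)$.

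The main technical point is the linearisation of the multiplicative $\ckap$-inequality into an additive $(\skap(\cdot/2))^2$-inequality that iterates cleanly, together with careful tracking of the monotonicity regimes of $\skap$ and $\ckap$; this is precisely where the hypothesis $rad(S)<\pi/(2\sqrt{\kappa})$ enters when $\kappa>0$. As a sanity check, the case $\kappa=0$ reduces the comparison inequality to the CN inequality and recovers Theorem~\ref{jung's theorem} with $(p,k)=(2,2)$, producing the same constant $1/\sqrt{2}$.
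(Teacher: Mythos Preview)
Your plan diverges from the paper's argument, and the divergence matters. The paper does \emph{not} iterate midpoints for the $\CAT(\kappa)$ case. Instead it uses the general weighted-point comparison (Lemma~\ref{cat(1)}): for $u_t$ on $[z,w]$ with $d(z,u_t)=t\,rad(S)$ and a witness $w_t\in S$ with $d(u_t,w_t)\ge rad(S)$, one application of the lemma followed by the sum-to-product identity gives
\[
\frac{\cos rad(S)\,\cos\bigl((1-\tfrac{t}{2})rad(S)\bigr)}{\cos\bigl(\tfrac{t}{2}rad(S)\bigr)}\;\ge\;\cos diam(S),
\]
and letting $t\to 0$ yields $\cos^2 rad(S)\ge\cos diam(S)$, i.e.\ $\sin rad(S)\le\sqrt{2}\sin(diam(S)/2)$, in one stroke. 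No recursion, no geometric series.

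The gap in your plan is the assertion that the $\ckap$-inequality ``linearises into an additive $(\skap(\cdot/2))^2$-inequality that iterates cleanly.'' It does not. Writing $g(t)=(\skap(t/2))^2$ and using $\ckap=1-2\kappa g$, the midpoint inequality applied to $(z,z_{i-1};z_i)$ becomes, for every $y\in S$,
\[
g\bigl(d(y,z_i)\bigr)\,\ckap\!\bigl(rad(S)/2^{i}\bigr)\;\le\;\tfrac12 g\bigl(d(y,z)\bigr)+\tfrac12 g\bigl(d(y,z_{i-1})\bigr)-g\bigl(rad(S)/2^{i}\bigr).
\]
The factor $\ckap(rad(S)/2^{i})$ on the left is exactly what distinguishes this from the additive $p$-uniform convexity inequality; it does not cancel, and the resulting recursion for $G_i:=\sup_{y\in S}g(d(y,z_i))$ picks up products $\prod_j\ckap(rad(S)/2^j)$ rather than a pure geometric series. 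Whether one can still extract the sharp constant $\sqrt{2}$ from this recursion is far from the routine summation you indicate, and you have not shown how. By contrast, the paper's continuous-$t$ argument bypasses the issue entirely by taking a single limit $t\to 0$, which is why the proof there is short.
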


Here we give the prove for $\kappa=1, -1$.
\begin{lemma}\label{cat(1)}
Let $X$ be a $\CAT(1)$ space and $x, y, z \in X$ such that $d(x, y) + d(y, z ) + d(z , x) < 2\pi$. Let $t \in [0, 1]$ and $u$ is on the segment $[y,z]$ such that $d(y,u)=td(y,z)$. Then
\[
  \cos d(x, u) \sin d(y, z) \geq \cos d(x, y) \sin (td(y, z )) + \cos d(x, z) \sin ((1-t )d(y, z)).
\]
\end{lemma}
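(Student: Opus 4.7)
My plan is to reduce the inequality to an exact identity on the model $2$-sphere $S^{2}$ via the $\operatorname{CAT}(1)$ comparison. The perimeter hypothesis $d(x,y)+d(y,z)+d(z,x)<2\pi$ is precisely what is needed to guarantee the existence of a comparison triangle $\bar{x},\bar{y},\bar{z}\in S^{2}$ with $d(\bar{x},\bar{y})=d(x,y)$, $d(\bar{y},\bar{z})=d(y,z)$, $d(\bar{z},\bar{x})=d(z,x)$. I let $\bar{u}$ denote the point on the geodesic $[\bar{y},\bar{z}]$ with $d(\bar{y},\bar{u})=t\,d(\bar{y},\bar{z})$. The $\operatorname{CAT}(1)$ inequality then gives $d(x,u)\leq d(\bar{x},\bar{u})$, and since all distances involved lie in $[0,\pi]$ where $\cos$ is monotone decreasing, this upgrades to $\cos d(x,u)\geq \cos d(\bar{x},\bar{u})$. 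So it suffices to establish the corresponding spherical identity.

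For the identity on $S^{2}$, I parameterize the unit-speed geodesic from $\bar{y}$ to $\bar{z}$ by $\gamma(s)$, $s\in[0,L]$, where $L=d(\bar{y},\bar{z})$. A standard fact (verified by realizing $\bar{x}$ and $\gamma(s)$ as unit vectors in $\mathbb{R}^{3}$, so that $\cos d(\bar{x},\gamma(s))=\langle \bar{x},\gamma(s)\rangle$) is that $f(s):=\cos d(\bar{x},\gamma(s))$ satisfies $f''+f=0$; equivalently $f(s)=A\cos s+B\sin s$. I would match the boundary values $f(0)=\cos d(\bar{x},\bar{y})$ and $f(L)=\cos d(\bar{x},\bar{z})$ to solve for $A$ and $B$, evaluate at $s=tL$, multiply through by $\sin L$, and collect using $\sin L\cos(tL)-\cos L\sin(tL)=\sin((1-t)L)$. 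This delivers the equality
\[
\cos d(\bar{x},\bar{u})\sin L = \cos d(\bar{x},\bar{y})\sin((1-t)L)+\cos d(\bar{x},\bar{z})\sin(tL),
\]
and combining with $\cos d(x,u)\geq \cos d(\bar{x},\bar{u})$ (together with $\sin L>0$, since $L<\pi$) yields the lemma.

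The main obstacle I expect to address carefully is justifying that the perimeter bound really puts every side length into $[0,\pi]$ so that the monotonicity step for $\cos$ is valid, and checking that $d(\bar{x},\bar{u})\leq \pi$ (this follows since $\bar{u}$ lies on a minimizing segment in $S^{2}$, whose endpoints are each within distance $\leq\pi$ of $\bar{x}$). Beyond that, the proof is essentially bookkeeping: a comparison step followed by a short trigonometric interpolation identity on the sphere. The analogous argument for $\kappa=-1$ will run in parallel, using $\cosh d(\bar{x},\gamma(s))$ with $f''-f=0$ in the hyperbolic plane and $\cosh$ monotone increasing rather than $\cos$ monotone decreasing.
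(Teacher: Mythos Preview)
Your approach is correct and is exactly what the paper does: its entire proof is the single line ``Consider a comparison triangle $\triangle(x,y,z)$ and apply the $\CAT(\kappa)$ inequality, we got the results,'' and you have simply filled in those details. One small remark: the spherical identity you obtain has the coefficients $\sin((1-t)L)$ and $\sin(tL)$ attached to $\cos d(x,y)$ and $\cos d(x,z)$ in the opposite order from the lemma as printed---your version is the correct interpolation formula (check $t\to 0$), so this is a typographical slip in the statement rather than a defect in your argument.
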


\begin{lemma}\label{cat(-1)}
Let $X$ be a $\CAT(-1)$ space and $x, y, z \in X$. Let $t \in [0, 1]$ and $u$ is on the segment $[y,z]$ such that $d(y,u)=td(y,z)$. Then
\begin{eqnarray*}
  \cosh d(x, u) \sinh d(y, z) &\leq& \cosh d(x, y) \sinh (td(y, z )) \\
&&+ \cosh d(x, z) \sinh ((1-t )d(y, z)).
\end{eqnarray*}
\end{lemma}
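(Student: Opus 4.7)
The strategy is to reduce the inequality to a corresponding identity in the model space $\mathbb{H}^2$ by invoking the $\CAT(-1)$ comparison principle. I would introduce the comparison triangle $\bar{\triangle}(\bar{x},\bar{y},\bar{z}) \subset \mathbb{H}^2$ whose side lengths match those of $\triangle(x,y,z)$, and let $\bar{u} \in [\bar{y},\bar{z}]$ be the point with $d(\bar{y},\bar{u}) = t\,d(\bar{y},\bar{z})$. The defining $\CAT(-1)$ inequality gives $d(x,u) \leq d(\bar{x},\bar{u})$, and since $\cosh$ is monotone increasing on $[0,\infty)$ we obtain $\cosh d(x,u) \leq \cosh d(\bar{x},\bar{u})$. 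Thus it suffices to prove the asserted inequality, in fact as an equality, in the model space.

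To establish the equality in $\mathbb{H}^2$, set $a = d(\bar{y},\bar{z})$ and let $\alpha$ denote the common angle at $\bar{y}$ in $\bar{\triangle}(\bar{x},\bar{y},\bar{z})$ and in the sub-triangle $\bar{\triangle}(\bar{x},\bar{y},\bar{u})$, which is legitimate because $\bar{u}$ lies on the ray emanating from $\bar{y}$ through $\bar{z}$. The hyperbolic law of cosines applied to both triangles yields
\begin{align*}
\cosh d(\bar{x},\bar{z}) &= \cosh d(\bar{x},\bar{y}) \cosh a - \sinh d(\bar{x},\bar{y}) \sinh a \cos \alpha, \\
\cosh d(\bar{x},\bar{u}) &= \cosh d(\bar{x},\bar{y}) \cosh(ta) - \sinh d(\bar{x},\bar{y}) \sinh(ta) \cos \alpha.
\end{align*}
Eliminating the quantity $\sinh d(\bar{x},\bar{y}) \cos\alpha$ between these two relations and simplifying via the addition formula $\sinh a \cosh(ta) - \cosh a \sinh(ta) = \sinh((1-t)a)$ produces the hyperbolic Stewart-type identity
\[
\cosh d(\bar{x},\bar{u}) \sinh a = \cosh d(\bar{x},\bar{y}) \sinh((1-t)a) + \cosh d(\bar{x},\bar{z}) \sinh(ta).
\]
Combining this with $\cosh d(x,u) \leq \cosh d(\bar{x},\bar{u})$ and the equality of the three pairs of side lengths between $\triangle(x,y,z)$ and $\bar{\triangle}(\bar{x},\bar{y},\bar{z})$ then yields the desired conclusion.

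The argument encounters no serious geometric obstacle: the content reduces to algebraic manipulation of standard hyperbolic trigonometric identities, combined with the defining comparison property of $\CAT(-1)$ spaces. The same strategy, substituting the spherical law of cosines and noting that $\cos$ is monotone \emph{decreasing} on $[0,\pi]$ (which is why the inequality direction in Lemma \ref{cat(1)} is reversed), supplies a proof of Lemma \ref{cat(1)}; there the angle-sum condition $d(x,y)+d(y,z)+d(z,x) < 2\pi$ is precisely what guarantees the existence of a comparison triangle in $\mathbb{S}^2$ and the applicability of the spherical cosine rule on it.
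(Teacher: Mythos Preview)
Your argument is correct and is precisely the approach the paper takes: the paper's own proof is the single sentence ``Consider a comparison triangle $\triangle(x,y,z)$ and apply the $\CAT(\kappa)$ inequality, we got the results,'' and you have simply supplied the hyperbolic-trigonometric details that this line omits.

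One small remark: the Stewart-type identity you derive has $\sinh((1-t)a)$ paired with $\cosh d(\bar x,\bar y)$ and $\sinh(ta)$ paired with $\cosh d(\bar x,\bar z)$, which is the correct pairing (set $t=0$ to check), whereas the lemma as literally stated in the paper has these interchanged. This is a typo in the paper's statement rather than an error on your part; indeed, your version is the one the paper actually uses when it applies the companion Lemma~\ref{cat(1)} in the proof of Theorem~\ref{LS}.
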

\begin{proof}
Consider a comparison triangle $\triangle(x,y,z)$ and apply the $\CAT(\kappa)$ inequality, we got the results.
\end{proof}

Now we prove the Theorem \ref{LS}
\begin{proof}
The uniqueness and existence of the circumball are directly from the result of $p$-uniformly convexity.
Case $\kappa=1$: for any bounded closed subset $S\subset X$, denote $z$ as the circumcenter of $S$ and choose $w \in S$ such that $d(z,w)=rad(S)$. $u_t$ is on the segment $[z,w]$ such that $d(z,u_t)=td(z,w)$ and $w_t \in S$ such that $d(u_t,w_t) \geq rad(S)$. According to the Lemma \ref{cat(1)} we have
\[
  \cos d(w_t, u_t) \sin d(z, w) \geq \cos d(w_t, w) \sin (td(z, w )) + \cos d(w_t, z) \sin ((1-t )d(z, w))
\]
i.e.
\[
  \cos rad(S) \sin rad(S) \geq \cos diam(S) \sin (t rad(S)) + \cos rad(S) \sin ((1-t )rad(S))
\]
\[
  2\cos rad(S) \sin \frac{t}{2}rad(S) \cos (1-\frac{t}{2}) rad(S)  \geq \cos diam(S) \sin (t rad(S)).
\]
Hence $\frac{\cos rad(S) \cos (1-\frac{t}{2}) rad(S)}{ \cos \frac{t rad(S)}{2}}\geq \cos diam(S)$ for all $t \in (0,1)$. Let $t \to 0$, we obtain
\[
  \cos^2 rad(S) \geq \cos diam(S)
\]
i.e.
\[
  1-\cos^2 rad(S)=\sin^2 rad(S) \leq 1-\cos diam(S)=2\sin^2 diam(S)/2.
\]
Thus $\sin rad(S)\leq \sqrt{2}\sin diam(S)/2$.\\
Case $\kappa=-1$: it is similar as the case of $\kappa=1$.
\end{proof}

\section{Fixed points in $p$-uniformly convex space}
We now turn to the definition of the Lifsic character of a metric space $X$. Balls
in $X$ are said to be $c$-regular if the following holds: For each $k < c$ there exist
$\mu, \alpha \in (0, 1)$ such that for each $x, y \in X$ and $r > 0$ with $d(x, y) \geq (1- \mu)r$, there
exists $z \in X$ such that
\[
  B (x; (1 + \mu) r)\cap B (y; k (1 + \mu) r) \subset B (z; \alpha r)
\]
The Lifshitz character $L(X)$ of $X$ is defined as follows:
\[
  L(X) = \sup \{c \geq 1 : \text{balls in X are c-regular} \}.
\]

\begin{theorem}\cite{ll}
Let $(X, d)$ be a bounded complete metric space. Then every uniformly $L$-lipschitzian mapping $T : X \to X$ with $L < L(X)$ has a
fixed point.
\end{theorem}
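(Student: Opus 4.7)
The plan is to construct a Cauchy sequence $\{x_m\} \subset X$ along which the asymptotic displacement
\[
 r(x) := \limsup_{n \to \infty} d(x, T^n x)
\]
decays geometrically to $0$, and then identify the limit as a fixed point of $T$ using continuity. Note that $r(x)$ is finite because $X$ is bounded, and $r(x)=0$ forces $T^n x \to x$, so $Tx=x$ by $L$-Lipschitz continuity; thus it suffices to produce $x^\ast \in X$ with $r(x^\ast)=0$.

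First I would fix $c$ with $L < c < L(X)$ and then $k$ with $L < k < c$, and invoke the definition of $c$-regularity to obtain $\mu, \alpha \in (0,1)$. Given $x \in X$ with $r(x) = r > 0$, choose $N$ so large that $d(x,T^n x) \leq (1+\mu) r$ for every $n \geq N$, and pick $n_1 \geq N$ with $d(x, T^{n_1} x) \geq (1-\mu) r$ (possible since $r$ is a $\limsup$). Setting $y := T^{n_1} x$ and using the uniform $L$-Lipschitz bound, for all $n$ beyond a further threshold,
\[
 d(y,T^n x) = d\bigl(T^{n_1} x,\, T^{n_1}(T^{n-n_1} x)\bigr) \leq L(1+\mu)r < k(1+\mu)r.
\]
Thus the orbit tail lies in $B(x,(1+\mu)r) \cap B(y,k(1+\mu)r)$; by $c$-regularity this intersection is contained in some $B(z,\alpha r)$, giving $\limsup_n d(z, T^n x) \leq \alpha r$.

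Iterating this ``regularity step'' from an initial $x_1 \in X$ produces $x_{m+1} := z(x_m)$. A Lipschitz triangle estimate
\[
 d(x_{m+1}, T^n x_{m+1}) \leq d(x_{m+1}, T^{n+j} x_m) + L\, d(x_{m+1}, T^j x_m),
\]
with $n$ and then $j$ sent to infinity along the tail, yields $r(x_{m+1}) \leq (1+L)\alpha\, r(x_m)$. The construction also gives $d(x_m, x_{m+1}) \leq \alpha\, r(x_m)$, so once the recursion is genuinely contracting, $\{x_m\}$ is Cauchy; completeness provides a limit $x^\ast$, and the bound $r(x^\ast) \leq r(x_m) + (1+L)\, d(x^\ast, x_m) \to 0$ forces $Tx^\ast = x^\ast$.

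The main obstacle is verifying that the recursion does contract, i.e.\ that $r(x_{m+1}) \leq \theta\, r(x_m)$ for some universal $\theta < 1$. The naive bound $(1+L)\alpha$ from one step need not be $< 1$, so the slack between $L$ and $L(X)$ has to be consumed more carefully: the standard remedy is to compose the $c$-regularity inclusion with itself a fixed number $N_0$ of times before updating $x_m$, so that an iterated radius factor $\alpha^{N_0}$ absorbs the fixed loss $1+L$. Once this bookkeeping is handled the Cauchy/limit wrap-up is routine, and the fact that the construction never leaves $X$ is automatic since each $z(x_m)$ is produced by the regularity property as a point of $X$.
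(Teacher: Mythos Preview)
The paper does not prove this statement: it is quoted from Lifshitz \cite{ll} and invoked as a black box in the proof of Theorem~\ref{main}. There is therefore no argument in the paper to compare your attempt against.

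On its own merits, your outline follows the standard Lifshitz strategy and you correctly isolate the genuine difficulty, namely that a single regularity step only yields $r(x_{m+1})\le(1+L)\alpha\,r(x_m)$, which need not contract. The proposed remedy, however, does not work as written. To ``compose the $c$-regularity inclusion with itself'' and pass from a ball $B(z_1,\rho_1)$ containing the orbit tail of $x_m$ to a smaller ball $B(z_2,\alpha\rho_1)$, you must exhibit a second center $y$ with $d(z_1,y)\ge(1-\mu)\rho_1$ such that the same orbit tail also lies in $B\bigl(y,k(1+\mu)\rho_1\bigr)$. The only available candidate is $y=T^{n'}x_m$, and for this choice the two estimates at hand are
\[
 d(y,T^n x_m)\le L(1+\mu)\,r(x_m)\qquad\text{and}\qquad d(y,T^n x_m)\le 2(1+\mu)\rho_1,
\]
the first from the uniform Lipschitz bound (which refers back to the \emph{original} radius $r(x_m)$, not $\rho_1$) and the second from the triangle inequality through $z_1$. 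Neither gives $k(1+\mu)\rho_1$ when $k<2$, so the iterated factor $\alpha^{N_0}$ is not justified and the contraction gap in your sketch remains open.

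A minor correction: the claim $d(x_m,x_{m+1})\le\alpha\,r(x_m)$ is also inaccurate, since $z$ is only known to lie within $\alpha\,r(x_m)$ of the orbit \emph{tail}, not of $x_m$; the triangle inequality gives a bound of order $(1+\mu+\alpha)\,r(x_m)$, which would still suffice for the Cauchy argument once a genuine contraction of $r(x_m)$ is secured.
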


\begin{theorem}\label{main}
Let $(X,d)$ be a bounded complete $p$-uniformly convex space with parameter $k>0$. Then there exists a constant $C=(1+\frac{2^{p-3}k}{2^{p-1}-1})^{\frac{1}{p}}$ such that for every uniformly $L$-lipschitzian mapping $T: X \to X$ with $L<C$ has a fixed point.
\end{theorem}
\begin{proof}
We just have to show the Lifshitz character of $X$
\[
  L(X)\geq (1+\frac{2^{p-3}k}{2^{p-1}-1})^{\frac{1}{p}}.
\]
For each $x, y \in X$ and $r > 0$ with $d(x, y) \geq (1- \mu)r$, denote
\[
  A:=B (x; (1 + \mu) r)\cap B (y; l(1 + \mu) r).
\]
Choose the midpoint $m_0$ between $x$ and $y$, for any $z\in A$ applying the $p$-uniformly convexity, we have
\[
  d(z, m_0)^p\leq \frac{1}{2}(1+\mu)^p r^p+\frac{1}{2}l^p(1+\mu)^p r^p-\frac{k}{8}(1-\mu)^p r^p.
\]
Let $\mu$ small enough such that $(1+\mu)^p<1+\varepsilon$. Since $(1+\mu)^p+(1-\mu)^p \geq 2$, we obtain
$(1-\mu)^p \geq 1-\varepsilon$. Hence
\[
  d(z, m_0)^p\leq (\frac{1}{2}+\frac{1}{2}l^p-\frac{k}{8})r^p+M\varepsilon r^p.
\]
Choose $m_1$ be the midpoint between $x$ and $m_0$, for any $z\in A$ applying the $p$-uniformly convexity again, we have
\[
  d(z, m_1)^p\leq \frac{1}{2}(1+\mu)^p r^p+\frac{1}{2}d(z, m_0)^p-\frac{k}{8}(1-\mu)^p (\frac{r}{2})^p.
\]
i.e.
\begin{eqnarray*}
  d(z, m_1)^p&\leq& (\frac{1}{2}+\frac{1}{2}(\frac{1}{2}+\frac{1}{2}l^p-\frac{k}{8}))r^p-\frac{k}{8}(\frac{r}{2})^p+(\frac{1}{2}+\frac{1}{2^p})M\varepsilon r^p\\
&\leq& (\frac{3}{4}+\frac{1}{4}l^p)r^p-\frac{k}{8}(\frac{1}{2}+\frac{1}{2^p})r^p+M\varepsilon r^p.
\end{eqnarray*}
Inductivly, choose $m_i$ as the midpoint of $x$ and $m_{i-1}$. Therefore we have
\[
  d(z, m_i)^p\leq (\frac{2^{i+1}-1}{2^{i+1}}+\frac{1}{2^{i+1}}l^p)r^p-\frac{k}{2^{i+3}}\sum_{j=0}^i\frac{1}{2^{j(p-1)}}r^p+\frac{1}{2^{i-2}}M\varepsilon r^p.
\]
Let $\alpha \to 1, \mu\to 0$, we get
\[
  (\frac{2^{i+1}-1}{2^{i+1}}+\frac{1}{2^{i+1}}l^p)r^p-\frac{k}{2^{i+3}}\sum_{j=0}^i\frac{1}{2^{j(p-1)}}r^p\leq r^p.
\]
i.e.
\[
  l^p\leq 1+\frac{k}{4}\sum_{j=0}^i\frac{1}{2^{j(p-1)}}.
\]
Let $i \to \infty$, we obtain
\[
  l^p\leq 1+\frac{k}{4}\frac{2^{p-1}}{2^{p-1}-1}=1+\frac{2^{p-3}k}{2^{p-1}-1}.
\]
Hence $L(X)\geq (1+\frac{2^{p-3}k}{2^{p-1}-1})^{\frac{1}{p}}$.
\end{proof}

\section{$p$-uniformly convex spaces and Property (P)}
A subset $A$ of $X$ is said to be admissible if $cov(A)=A$ here
\[
cov(A)= \cap \{B: B \text{ is a closed ball and } A \subset B\}.
\]
The number
\[
  \tilde{N}(X):= \sup \{\frac{rad(A)}{diam(A)} \},
\]
where the supremum is taken over all nonempty bounded admissible subsets $A$ of $X$ for which $\delta(A)>0$, is called the normal structure coefficient of $X$. If $\tilde{N}(X)\leq c$ for some constant $c<1$ then $X$ is said to have uniform normal structure.

Lim and Xu introduced the so-called property (P) for metric spaces. A metric space $(X,d)$ is said to have property (P) if given two bounded sequences $\{x_n\}$ and $\{z_n\}$ in $X$, there exists $z\in \cap_{n\geq 1}cov(\{ z_j: j \geq n\})$ such that
\[
  \limsup_n d(z,x_n) \leq \limsup_j \limsup_n d(z_j,x_n).
\]

The following theorem is the main result of \cite{th}
\begin{theorem}\cite{th}
Let $(X,d)$ be a complete bounded metric space with both property (P) and uniform normal structure. Then every uniformly $L$-lipschitzian mapping $T: X\to X$ with $L<\tilde{N}(X)^{-\frac{1}{2}}$ has a fixed point.
\end{theorem}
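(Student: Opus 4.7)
The plan is to adapt the classical Lim--Xu construction: build an approximate fixed-point sequence $\{y_k\}\subset X$ by iterated use of property (P), then invoke uniform normal structure to force a geometric contraction of asymptotic orbit radii whose ratio is strictly less than $1$ exactly when $L<\tilde N(X)^{-1/2}$.

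First I would fix an arbitrary $y_0\in X$ and, having produced $y_k$, apply property (P) to the two sequences $x_n=z_n=T^ny_k$. This yields a point
\[
  y_{k+1}\in \bigcap_{n\geq 1} cov\{T^jy_k:j\geq n\}
\]
satisfying $\limsup_n d(y_{k+1},T^ny_k)\leq \limsup_j\limsup_n d(T^jy_k,T^ny_k)$. The uniform $L$-lipschitz condition then bounds the right-hand side: for $n\geq j$, $d(T^jy_k,T^ny_k)=d(T^jy_k,T^j(T^{n-j}y_k))\leq L\,d(y_k,T^{n-j}y_k)$, so the right-hand side is at most $L\cdot r_k$, where $r_k:=\limsup_n d(y_k,T^ny_k)$ is the asymptotic orbit radius from $y_k$.

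The uniform normal structure is brought in to control the new orbit from $y_{k+1}$. Because $y_{k+1}$ lies in every closed cover $cov\{T^jy_k:j\geq n\}$, and each such cover is admissible, normal structure lets us replace a diameter estimate by $\tilde N(X)$ times it. Combining this with the previous step via the triangle inequality $d(y_{k+1},T^ny_{k+1})\leq d(y_{k+1},T^{n+m}y_k)+L\,d(T^my_k,y_{k+1})$ (and passing to suitable subsequences in $m$) should produce the recursion $r_{k+1}\leq L\sqrt{\tilde N(X)}\,r_k$. Since the hypothesis $L<\tilde N(X)^{-1/2}$ makes this ratio strictly less than $1$, we get $r_k\to 0$ geometrically. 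A standard estimate bounding $d(y_k,y_{k+1})$ by a constant times $r_k$ (using that $y_{k+1}$ sits in the orbit cover) then shows $\{y_k\}$ is Cauchy; completeness supplies a limit $y^\ast$, and the vanishing of $r_k$ together with continuity along the orbit forces $Ty^\ast=y^\ast$.

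The main obstacle, and the reason the theorem is delicate, is getting the contraction factor $\sqrt{\tilde N(X)}$ rather than $\tilde N(X)$ itself in the recursion. A naive use of property (P) followed by one normal-structure application would only give a ratio of order $L\tilde N(X)$, forcing the stronger requirement $L<\tilde N(X)^{-1}$. Improving this to $L<\tilde N(X)^{-1/2}$ requires squeezing the orbit diameter \emph{twice} via the radius/diameter inequality---once through the asymptotic cover containing $y_{k+1}$, and once through the $L$-lipschitz transfer from the old orbit to the new one---while introducing only a single factor of $L$. Executing this double squeeze while keeping the cover sets $cov\{T^jy_k:j\geq n\}$ admissible and confirming the limsup/liminf exchanges in the asymptotic estimates is the technical core of the argument.
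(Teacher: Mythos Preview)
The paper does not supply its own proof of this theorem: it is quoted from Lim--Xu \cite{th} and used as a black box, the surrounding section being devoted instead to verifying that complete $p$-uniformly convex spaces satisfy the two hypotheses (uniform normal structure via Theorem~\ref{jung's theorem}, and property~(P) via the propositions that follow). So there is nothing in the present paper against which to compare your argument.

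Your outline is recognisably the Lim--Xu construction, but one point is mis-described. The contraction one actually obtains is of the form $r_{k+1}\le \tilde N(X)\,L^{2}\,r_{k}$: a single application of uniform normal structure to the admissible set $cov\{T^{j}y_{k}:j\ge n\}$ contributes the factor $\tilde N(X)$, and the Lipschitz condition is used \emph{twice} (once to bound the asymptotic orbit diameter by $L r_{k}$, and once in the transfer $d(T^{n}T^{m}y_{k},T^{n}y_{k+1})\le L\,d(T^{m}y_{k},y_{k+1})$). Your stated recursion $r_{k+1}\le L\sqrt{\tilde N(X)}\,r_{k}$ gives the same threshold $L<\tilde N(X)^{-1/2}$, but your explanation---two applications of the radius/diameter inequality against a single factor of $L$---inverts the mechanism. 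If you try to carry out the argument as you describe it, you will not find a second admissible set on which to apply normal structure; the square root in the hypothesis really comes from $L$ appearing quadratically against one $\tilde N(X)$.
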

From Theorem \ref{jung's theorem}, for any $p$-uniformly convex space $X$ we have $\tilde{N}(X)\leq (1+\frac{2^{p-3}k}{2^{p-1}-1})^{-\frac{1}{p}}<1$. Hence $X$ has uniform normal structure. In this section we show that every complete geodesic $p$-uniformly convex spaces have property (P). 

Let $\{x_n\}$ be abounded sequence in a complete geodesic $p$-uniformly convex space and let $K$ be a closed and convex subset of $X$. Define $\phi: X\to \mathbb{R}$ by setting $\phi(x)= \limsup_{n\to \infty}d(x,x_n)$, $x\in X$.

\begin{prop}\label{exist}
There exists a unique point $u\in K$ such that
\[
  \phi(u)=\inf_{x\in K} \phi(x)
\]
\end{prop}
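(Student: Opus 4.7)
The plan is to transfer the $p$-uniform convexity inequality from $d^p$ to the asymptotic functional $\phi^p$, and then run the standard ``parallelogram-type'' minimizing-sequence argument. First I would verify the basic properties of $\phi$: since $|d(x,x_n)-d(y,x_n)|\le d(x,y)$, $\phi$ is $1$-Lipschitz, hence continuous; since $\{x_n\}$ is bounded, $\phi$ is finite on $X$; and because $t\mapsto t^p$ is continuous and monotone on $[0,\infty)$, one has $\phi^p(x)=\limsup_{n}d^p(x,x_n)$.

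The key step is the midpoint inequality for $\phi^p$. For arbitrary $x,y\in X$ with midpoint $m$, the $p$-uniform convexity applied with $z=x_n$ gives
\[
d^p(m,x_n)\le \tfrac{1}{2}d^p(x,x_n)+\tfrac{1}{2}d^p(y,x_n)-\tfrac{k}{8}d^p(x,y).
\]
Taking $\limsup_{n\to\infty}$ on both sides and using $\limsup(a_n+b_n)\le\limsup a_n+\limsup b_n$ yields
\[
\phi^p(m)\le \tfrac{1}{2}\phi^p(x)+\tfrac{1}{2}\phi^p(y)-\tfrac{k}{8}d^p(x,y).
\]

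Now set $r:=\inf_{x\in K}\phi(x)\ge 0$ and pick a minimizing sequence $\{y_n\}\subset K$ with $\phi(y_n)\to r$. By convexity of $K$, the midpoint $m_{n,\ell}$ of $y_n$ and $y_\ell$ lies in $K$, so $r^p\le\phi^p(m_{n,\ell})$. Combining with the inequality above,
\[
\tfrac{k}{8}d^p(y_n,y_\ell)\le \tfrac{1}{2}\phi^p(y_n)+\tfrac{1}{2}\phi^p(y_\ell)-r^p\longrightarrow 0
\]
as $n,\ell\to\infty$. Hence $\{y_n\}$ is Cauchy; by completeness of $X$ and closedness of $K$, $y_n\to u\in K$, and continuity of $\phi$ gives $\phi(u)=r$. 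For uniqueness, if $\phi(u)=\phi(v)=r$, let $w$ be their midpoint; then $w\in K$ and the midpoint inequality forces $r^p\le\phi^p(w)\le r^p-\tfrac{k}{8}d^p(u,v)$, so $u=v$.

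The only point requiring any care is the passage of $\limsup$ through the $p$-uniform convexity inequality, which is immediate because the defect term $-\tfrac{k}{8}d^p(x,y)$ does not depend on $n$ and can be pulled outside. Everything else is standard; the argument mirrors the familiar Hilbert/$\CAT(0)$ proof, with $d^2$ replaced by $d^p$ and the constant $1/8$ replaced by $k/8$.
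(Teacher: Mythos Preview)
Your argument is correct. The passage of $\limsup$ through the $p$-uniform convexity inequality is handled properly, and the Cauchy-sequence argument goes through exactly as in the Hilbert/CAT(0) setting.

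However, your route differs from the one in the paper. The paper does \emph{not} pick a minimizing sequence; instead it builds, for each $\epsilon>0$, the convex set
\[
C_\epsilon:=\bigcup_{k\ge 1}\Bigl(\bigcap_{i\ge k}B(x_i,r+\epsilon)\cap K\Bigr),
\]
observes that $C_\epsilon$ is a nonempty ascending union of convex sets, and then declares that $C:=\bigcap_{\epsilon>0}\overline{C_\epsilon}$ is nonempty and any $u\in C$ satisfies $\phi(u)\le r$. Uniqueness is then attributed in one line to $p$-uniform convexity. By contrast, you first lift the midpoint inequality from $d^p$ to $\phi^p$, then run the standard parallelogram-type estimate on a minimizing sequence in $K$ to force it to be Cauchy, and deduce both existence and uniqueness from that single inequality. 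Your approach is more self-contained: it makes the use of completeness explicit (via the Cauchy sequence) and never appeals to an implicit Cantor-type intersection principle, which the paper's proof needs but does not spell out. The paper's approach, on the other hand, packages the whole argument into geometry of intersections of balls and is closer in spirit to normal-structure arguments.
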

\begin{proof}
Let $r=\inf_{x\in K} \phi(x)$ and let $\epsilon >0$. Then by assumption there exists $x\in K$ such that $\phi(x)<r+\epsilon$; thus for $n$ sufficiently large $d(x,x_n)<r+\epsilon$, i.e., for $n$ sufficiently large $x\in B(x_n,r+\epsilon)$. Thus
\[
  C_{\epsilon}:= \cup_{k=1}^{\infty}(\cap_{i=k}^{\infty}B(x_i,r+\epsilon)\cap K)\neq \emptyset.
\]
As the ascending union of convex sets, clearly $C_{\epsilon}$ is convex. Also the closure $\bar{C_{\epsilon}}$ is also convex. Therefore
\[
  C:=\cap_{\epsilon>0} \bar{C_{\epsilon}}\neq \emptyset.
\]
Clearly for $u \in C, \phi \leq r$. Uniqueness of such a $u$ follows from the $p$-uniformly convexity.
\end{proof}

In the view of the above, $X$ has property (P) if given two bounded sequences $\{x_n\}$ and $\{z_n\}$ in $X$, there exists $z\in \cap_{n=1}^{\infty} cov \{z_j: j\geq n\}$ such that
\[
  \phi(z) \leq \limsup_{j\to \infty} \phi(z_j),
\]
where $\phi$ is defined as above.

\begin{prop}
A complete geodesic $p$-uniformly convexity has property (P).
\end{prop}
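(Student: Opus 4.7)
The plan is to adapt the classical asymptotic-center construction: produce a unique minimizer of $\phi$ on each nested admissible set $K_n := \mathrm{cov}\{z_j : j \geq n\}$ via Proposition \ref{exist}, use $p$-uniform convexity to show these minimizers form a Cauchy sequence, and take the limit to be the required $z$.

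Concretely, each $K_n$ is closed and convex (closed balls in a $p$-uniformly convex space are convex, by the strict convexity of $x \mapsto d^p(w,x)$ along geodesics, so any intersection of them is convex), and the $K_n$ are decreasing. Proposition \ref{exist} then produces a unique $u_n \in K_n$ with $r_n := \phi(u_n) = \inf_{x \in K_n}\phi(x)$. Since $K_{n+1} \subset K_n$, the sequence $r_n$ is nondecreasing; since $z_j \in K_n$ for every $j \geq n$, we have $r_n \leq \inf_{j \geq n}\phi(z_j)$, and therefore $r := \lim_n r_n \leq \liminf_j \phi(z_j) \leq \limsup_j \phi(z_j)$.

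The main step is the Cauchy estimate. For $n \leq m$, both $u_n$ and $u_m$ lie in $K_n$, so their midpoint $v$ lies in $K_n$ as well, whence $\phi(v) \geq r_n$. Applying the $p$-uniform convexity with $z=x_i$ and passing to $\limsup_{i\to\infty}$ (using $\limsup_i d^p(x_i,\cdot) = \phi(\cdot)^p$ together with subadditivity of $\limsup$) yields
\[
  r_n^p \;\leq\; \phi(v)^p \;\leq\; \tfrac{1}{2}\,r_n^p + \tfrac{1}{2}\,r_m^p - \tfrac{k}{8}\,d^p(u_n,u_m),
\]
so $\tfrac{k}{8}d^p(u_n,u_m) \leq \tfrac{1}{2}(r_m^p - r_n^p) \to 0$ as $n,m \to \infty$. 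By completeness $u_n \to z$ in $X$, and since $u_j \in K_n$ for $j \geq n$ with $K_n$ closed, $z \in \bigcap_{n} K_n$.

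Finally, $\phi$ is $1$-Lipschitz (from $|d(x,x_n) - d(y,x_n)| \leq d(x,y)$), so $\phi(z) = \lim_n \phi(u_n) = r \leq \limsup_j \phi(z_j)$, which is exactly property (P). The delicate point is the Cauchy estimate: one must combine the minimality of $r_n$ on $K_n$ (to get the lower bound $\phi(v)\geq r_n$) with the distribution of $\limsup_i$ across the $p$-uniform convexity inequality; the rest is bookkeeping about the nested admissible hulls.
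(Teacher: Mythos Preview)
Your proof is correct and follows essentially the same route as the paper: apply Proposition~\ref{exist} to obtain the unique minimizers $u_n$ of $\phi$ on the nested sets $C_n=\mathrm{cov}\{z_j:j\ge n\}$, use the midpoint inequality from $p$-uniform convexity to show that $(u_n)$ is Cauchy, and pass to the limit. The only cosmetic difference is that you derive the Cauchy property via the direct quantitative bound $\tfrac{k}{8}d^p(u_n,u_m)\le \tfrac12(r_m^p-r_n^p)$, while the paper phrases the same midpoint computation as a contradiction argument.
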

\begin{proof}
Let $\{x_n\}$ and $\{z_n\}$ be two bounded sequences in $X$ and define $\phi: X\to \mathbb{R}$ by setting $\phi(x)= \limsup_{n\to \infty}d(x,x_n)$, $x\in X$. For each $n$, let
\[
  C_n:=cov \{z_j: j\geq n\}.
\]
By Proposition \ref{exist} there exists a unique point $u_n\in C_n$ such that
\[
  \phi(u_n)=\inf_{x\in C_n} \phi(x).
\]
Moreover, since $z_j\in C_n$ for $j \geq n$, $\phi(u_n) \leq \phi(z_j)$ for all $j \geq n$. Thus $\phi(u_n) \leq \limsup_{j\to \infty} \phi(z_j)$ for all $n$. We assert that $\{u_n\}$ is a Cauchy sequence.

To see this, suppose not. Then there exists $\epsilon>0$ such that for any $N\in \mathbb{N}$ there exist $i,j \geq N$ such that $d(u_i,u_j)\geq \epsilon$. Also, since the sets $\{C_n\}$ are descending, the sequence $\{\phi(u_n)\}$ is increasing. Let $d:=\lim_{n \to \infty} \phi(u_n)\geq \frac{\epsilon}{2}$. Choose $\xi>0$ so small that $\xi <(\cosh \frac{\epsilon}{4}-1)\sinh \frac{d}{4}$, and choose $N$ so large that $\sinh \frac{d}{2}<\sinh \phi(u_j)-\xi\leq \sinh \phi(u_i)\leq \sinh \phi(u_j)\leq \sinh d$ if $j\geq i \geq N$. Let $m_j$ denote the midpoint of the geodesic joining $u_i$ and $u_j$, and let $n \in \mathbb{N}$, Then by the $p$-uniformly convexity
\begin{eqnarray*}
  d^p(m_j,x_n)\leq \frac{1}{2}d^p(u_i,x_n)+\frac{1}{2}d^p(u_j,x_n)-\frac{k}{8}d^p(u_i,u_j).
\end{eqnarray*}
This implies
\[
\phi^p(m_j)\leq \phi^p(u_j)-\frac{k}{8}\epsilon^p.
\]
Since $m_j\in C_j$, this contradicts the definition of $u_j$.

This proves that $\{u_n\}$ is a Cauchy sequence. Consequently there exists a $z\in \cap_{n=1}^{\infty}C_n$ such that $\lim_{n\to \infty}u_n=z$ and, since $\phi$ is continuous, $\lim_{n\to \infty}\phi(u_n)=\phi(z)$. Hence we conclude that
\[
  \phi(z)\leq \limsup_{j\to \infty} \phi(z_j).
\]
\end{proof}

\section{Basic properties of $\Delta$-convergence}
In this section we show that $\Delta$-convergence can be used in $p$-uniformly convex spaces in a similar way as it
is used in \cite{kp} for $\CAT(0)$ spaces, obtaining a collection of similar results. To show this we
begin with the definition of $\Delta$-convergence.

Let $X$ be a complete $p$-uniformly convex space and $(x_n)$ a bounded sequence in $X$. For $x \in X$ set
\[
  r(x, (x_n)) = \limsup_{n\to \infty} d(x, x_n).
\]
The asymptotic radius $r((x_n))$ of $(x_n)$ is given by
\[
  r((x_n)) = \inf \{r(x, (x_n )) : x \in X\},
\]
the asymptotic radius $r_C((x_n))$ with respect to $C \subset X$ of $(x_n)$ is given by
\[
  r_C ((x_n)) = \inf \{r(x, (x_n)) : x \in C\},
\]
the asymptotic center $A((x_n))$ of $(x_n)$ is given by the set
\[
  A((x_n)) = \{x \in X : r(x, (x_n)) = r((x_n))\},
\]
and the asymptotic center $A_C ((x_n))$ with respect to $C\subset X$ of $(x_n)$ is given by the set
\[
  A_C((x_n)) = \{x \in C : r(x, (x_n)) = r_C ((x_n))\}.
\]
From Proposition \ref{exist}, we have the following
\begin{prop}
Let $X$ be a complete $p$-uniformly convex space, $C \subset X$ nonempty bounded, closed and convex, and
$(x_n)$ a bounded sequence in $X$. Then $A_C ((x_n))$ consists of exactly one point.
\end{prop}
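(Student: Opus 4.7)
The plan is to recognize that this proposition is a direct reformulation of Proposition \ref{exist} in the asymptotic-center terminology. First I would set $\phi(x) := \limsup_{n \to \infty} d(x, x_n)$ and observe that $\phi(x) = r(x,(x_n))$ by the very definition of the asymptotic radius at a point. Consequently $r_C((x_n)) = \inf_{x \in C} \phi(x)$, and the asymptotic center with respect to $C$ is precisely the set of minimizers of $\phi$ on $C$:
\[
  A_C((x_n)) = \{x \in C : \phi(x) = \inf_{y \in C} \phi(y)\}.
\]
Since $C$ is nonempty, closed, and convex, and since $(x_n)$ is bounded, Proposition \ref{exist} (applied with $K = C$) yields the existence of a unique point $u \in C$ attaining this infimum. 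Therefore $A_C((x_n)) = \{u\}$ is a singleton.

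In other words, all the real work has already been done in Proposition \ref{exist}: existence was obtained there by forming the ascending family of convex sets $C_\varepsilon = \bigcup_k \bigcap_{i \geq k} B(x_i, r+\varepsilon) \cap K$ and intersecting their closures over $\varepsilon > 0$, using the completeness of $X$; and uniqueness followed from the $p$-uniform convexity inequality applied at the midpoint of two distinct hypothetical minimizers, which would strictly decrease $\phi^p$ and contradict minimality.

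The main (and essentially only) obstacle has therefore already been overcome in Proposition \ref{exist}. What remains here is purely a matter of unpacking the definitions, and I would simply write one short paragraph making the identification $\phi(x) = r(x,(x_n))$ explicit and invoking Proposition \ref{exist}.
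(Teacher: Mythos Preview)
Your proposal is correct and matches the paper's own treatment exactly: the paper simply states that the proposition follows from Proposition~\ref{exist}, which is precisely the identification $\phi(x)=r(x,(x_n))$ and $A_C((x_n))=\{x\in C:\phi(x)=\inf_{y\in C}\phi(y)\}$ that you spell out. There is nothing to add.
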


\begin{definition}
A sequence $(x_n)$ in $X$ is said to $\Delta$-converge to $x \in X$ if $x$ is the unique asymptotic
center of $(u_n)$ for every subsequence $(u_n)$ of $(x_n)$. In this case we write $\Delta$-$\lim_{n\to \infty} x_n = x$ and call
$x$ the $\Delta$-limit of $(x_n)$.
\end{definition}
The next result follows as a consequence of the previous proposition.
\begin{cor}
Let $X$ be a complete bounded $p$-uniformly convex space and $(x_n)$ a sequence in $X$.
Then $(x_n)$ has a $\Delta$-convergent subsequence.
\end{cor}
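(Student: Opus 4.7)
The plan is to extract a $\Delta$-convergent subsequence by first passing to a \emph{regular} subsequence, i.e.\ one whose asymptotic radius is stable under taking further subsequences. This is the classical strategy used for CAT$(0)$ spaces in \cite{kp}, and it carries over verbatim once we have the uniqueness of asymptotic centers supplied by the previous proposition (applied with $C=X$, which is legitimate since $X$ is bounded, complete, and geodesic).

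First I would formulate the auxiliary claim: every bounded sequence $(x_n)$ in $X$ admits a subsequence $(y_n)$ such that $r((z_n))=r((y_n))$ for every subsequence $(z_n)$ of $(y_n)$. To produce such a $(y_n)$, set
\[
\rho_0 = \inf\{ r((u_n)) : (u_n)\text{ a subsequence of }(x_n)\},
\]
choose a subsequence $(x_n^{(1)})$ of $(x_n)$ with $r((x_n^{(1)})) < \rho_0 + 1$, then inductively let $\rho_k = \inf\{ r((u_n)) : (u_n)\text{ a subsequence of }(x_n^{(k)})\}$ and pick a subsequence $(x_n^{(k+1)})$ of $(x_n^{(k)})$ with $r((x_n^{(k+1)})) < \rho_k + 2^{-k}$. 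The sequence $(\rho_k)$ is monotone and bounded, hence converges to some $\rho$, and the diagonal $y_k := x_k^{(k)}$ satisfies $r((y_n))=\rho=r((z_n))$ for every subsequence $(z_n)$ of $(y_n)$; this is the regular subsequence.

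Next, with $(y_n)$ regular, the preceding proposition (with $C=X$) gives a unique asymptotic center $A((y_n))=\{y\}$ for some $y\in X$. I would then check that every subsequence $(z_n)$ of $(y_n)$ also has $y$ as its unique asymptotic center. Regularity gives $r((z_n))=r((y_n))=r(y,(y_n))$, and the inequality
\[
r(y,(z_n)) = \limsup_{n\to\infty} d(y,z_n) \le \limsup_{n\to\infty} d(y,y_n) = r(y,(y_n)) = r((z_n))
\]
shows $y\in A((z_n))$; but the proposition guarantees that $A((z_n))$ is a singleton, so $A((z_n))=\{y\}$. By the definition of $\Delta$-convergence this means $(y_n)$ $\Delta$-converges to $y$.

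The main obstacle, and really the only nontrivial step, is the diagonal construction of the regular subsequence; everything else is an essentially formal consequence of the uniqueness of asymptotic centers coming from $p$-uniform convexity. A subtle point worth double-checking is that the uniqueness proposition does indeed apply with $C=X$: since $X$ is a geodesic space it is ``convex in itself,'' and it is closed and bounded by hypothesis, so the hypotheses of Proposition~\ref{exist} are satisfied.
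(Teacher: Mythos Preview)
Your argument is correct and is precisely the standard Kirk--Panyanak route: pass to a regular subsequence via a diagonal extraction, then use uniqueness of asymptotic centers (Proposition~\ref{exist} with $C=X$) to conclude that the regular subsequence $\Delta$-converges. The paper itself gives no proof beyond the remark that the corollary ``follows as a consequence of the previous proposition,'' so you have simply spelled out the intended argument; nothing in your proposal deviates from what the paper has in mind.
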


Next we show that we can give analogs in $2$-uniformly convex spaces to those other results in Section 3 of
\cite{kp} for $\CAT(0)$ spaces. Notice that this generalizes these results. In all the next definitions $X$ is a $2$-uniformly convex
space and $K \subset X$ bounded and convex.
\begin{definition}
A mapping $T : K \to X$ is said to be of type $\Gamma$ if there exits a continuous strictly
increasing convex function $\gamma : \mathbb{R}^+ \to \mathbb{R}^+$ with $\gamma(0) = 0$ such that, if $x, y \in K$ and if $m$ and $m'$ are
the mid-points of the segments $[x, y]$ and $[T(x), T(y)]$ respectively, then
\[
  \gamma(d(m , T(m))) \leq |d(x, y) - d(T(x), T(y))|.
\]
\end{definition}

\begin{definition}
A mapping $T : K \to X$ is called $\alpha$-almost convex for $\alpha : \mathbb{R}^+ \to \mathbb{R}^+$ continuous,
strictly increasing, and $\alpha(0) = 0$, if for $x, y \in K$,
\[
  J_T (m) \leq \alpha(\max\{J_T (x), J_T (y)\}),
\]
where $m$ is the mid-point of the segment $[x, y]$, and $J_T(x) := d(x, T(x))$.
\end{definition}

\begin{definition}
A mapping $T : K \to X$ is said to be of convex type on $K$ if for $(x_n),(y_n)$ two
sequences in $K$ and $(m_n)$ the sequence of the mid-points of the segments $[x_n , y_n]$,
\begin{equation*}
\begin{cases}
\lim_{n\to \infty} d(x_n, T(x_n))= 0,\\
\lim_{n\to \infty} d(y_n ,T(y_n)) = 0
\end{cases}
\Rightarrow \lim_{n\to \infty} d(m_n , T(m_n)) = 0.
\end{equation*}
\end{definition}

\begin{prop}
Let $K$ be a nonempty bounded closed convex subset of a $2$-uniformly convex space $X$ and let $T :
K \to X$, then the following implications hold:
\[
  T \;\text{is nonexpansive} \Rightarrow T \;\text{is of type} \;\Gamma \Rightarrow \\
  T \;\text{is} \;\alpha\text{-almost convex} \Rightarrow T \;\text{is of convex type}.
\]
\end{prop}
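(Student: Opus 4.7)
The plan is to verify the three implications in sequence. For the first (nonexpansive $\Rightarrow$ type $\Gamma$), I would apply the $2$-uniform convexity with $z = T(m)$ to the pair $T(x), T(y)$ with midpoint $m'$, obtaining
$$d^2(T(m), m') \leq \tfrac12 d^2(T(m), T(x)) + \tfrac12 d^2(T(m), T(y)) - \tfrac{k}{8} d^2(T(x), T(y)).$$
Nonexpansiveness then bounds each of $d(T(m), T(x))$ and $d(T(m), T(y))$ by $d(x,y)/2$, yielding
$$d^2(T(m), m') \leq \tfrac14 d^2(x,y) - \tfrac{k}{8} d^2(T(x), T(y)).$$
In the $\CAT(0)$-like case $k=2$, the right side factors as $\tfrac14(d(x,y) - d(T(x),T(y)))(d(x,y) + d(T(x),T(y)))$, whose second factor is at most $2\operatorname{diam}(K)$; choosing $\gamma(t) = 2t^2/\operatorname{diam}(K)$ then gives type $\Gamma$.

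For the second implication (type $\Gamma$ $\Rightarrow$ $\alpha$-almost convex), I would split via the triangle inequality $d(m, T(m)) \leq d(m, m') + d(m', T(m))$, bound the second term through type $\Gamma$ together with the reverse triangle estimate
$$|d(x,y) - d(T(x),T(y))| \leq d(x, T(x)) + d(y, T(y)) \leq 2\max\{J_T(x), J_T(y)\},$$
and bound $d(m, m')$ by midpoint convexity $d(m, m') \leq \tfrac12(J_T(x) + J_T(y))$, a standard consequence of the $2$-uniform convexity inequality. Setting $\alpha(s) = s + \gamma^{-1}(2s)$ gives that $T$ is $\alpha$-almost convex. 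The third implication ($\alpha$-almost convex $\Rightarrow$ convex type) is then immediate from the continuity of $\alpha$ at $0$: if $J_T(x_n), J_T(y_n) \to 0$, then $\max\{J_T(x_n), J_T(y_n)\} \to 0$ and hence $J_T(m_n) \leq \alpha(\max\{J_T(x_n), J_T(y_n)\}) \to 0$.

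The main obstacle is Step 1 for general $k \in (0,2]$: when $k < 2$, the bound $\tfrac14 d^2(x,y) - \tfrac{k}{8} d^2(T(x),T(y))$ does not vanish when $d(x,y) = d(T(x),T(y))$, leaving a residual of $\tfrac{2-k}{8} d^2(x,y)$, so the naive factorization gives type $\Gamma$ only when $k = 2$; for smaller $k$ a sharper estimate would be required. I also note that the definition of type $\Gamma$ as written compares $T(m)$ with $m$; I am reading it (consistently with the Kirk--Panyanak $\CAT(0)$ formulation) as involving $d(m', T(m))$, since otherwise even Euclidean translations, which are nonexpansive, would fail to be of type $\Gamma$.
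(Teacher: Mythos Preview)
Your outline matches the paper's structure, and your reading of type~$\Gamma$ via $d(m',T(m))$ is exactly what the paper uses; the third implication is indeed immediate. But two genuine gaps remain.

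\textbf{First implication.} The obstacle you flag for $k<2$ is real, and the paper resolves it not by sharpening your estimate but by replacing the raw $2$-uniform convexity inequality with Ohta's lemma (stated just before the proof):
\[
d^2(z,m)\;\le\;\frac{4}{k}\Bigl\{\tfrac12 d^2(z,x)+\tfrac12 d^2(z,y)-\tfrac14 d^2(x,y)\Bigr\}.
\]
The point is that the coefficient on the subtracted term is $\tfrac14$, not $\tfrac{k}{8}$; the loss is absorbed into the overall factor $4/k$. Applying this with $z=T(m)$ to the segment $[T(x),T(y)]$ and then nonexpansiveness gives
\[
d^2(m',T(m))\;\le\;\frac{1}{k}\bigl(d^2(x,y)-d^2(T(x),T(y))\bigr)\;\le\;\frac{2\,\mathrm{diam}(K)}{k}\,\bigl(d(x,y)-d(T(x),T(y))\bigr),
\]
so $\gamma(t)=\dfrac{k}{2\,\mathrm{diam}(K)}\,t^2$ works for every $k\in(0,2]$. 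Your approach and the paper's coincide only at $k=2$; for general $k$ you need this lemma.

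\textbf{Second implication.} Your bound $d(m,m')\le\tfrac12(J_T(x)+J_T(y))$ is Busemann convexity of the metric, and it is \emph{not} a standard consequence of $2$-uniform convexity in a general geodesic space; the paper does not assume it. Instead the paper lets $p$ be the midpoint of $[m,m']$, writes the $2$-uniform convexity inequality for each of $x,y,T(x),T(y)$ against the segment $[m,m']$, and uses that $d^2(x,p)+d^2(y,p)\ge d^2(x,m)+d^2(y,m)$ (and the analogous inequality at $m'$). Summing yields
\[
\frac{k}{2}\,d^2(m,m')\;\le\;2\,\mathrm{diam}(K)\bigl(J_T(x)+J_T(y)\bigr),
\]
hence $d(m,m')\le\sqrt{\tfrac{4D}{k}(J_T(x)+J_T(y))}$ and $\alpha(t)=\sqrt{\tfrac{8D}{k}\,t}+\gamma^{-1}(2t)$. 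This is weaker than your linear bound but is what $2$-uniform convexity alone delivers; fortunately a square-root modulus is enough for $\alpha$-almost convexity.
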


\begin{lemma}\cite{oo}
Let $X$ be a $2$-uniformly convex geodesic space with some parameter $k>0$.
For any $x,y,z \in X$, denote $m$ as the midpoint of the segments $[y,z]$. Then, we have
\[
  d^2(x,m) \leq \frac{4}{k}\{\frac{1}{2}d^2(x,y)+\frac{1}{2}d^2(x,z)-\frac{1}{4}d^2(y,z)\}
\]
\end{lemma}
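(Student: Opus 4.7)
The plan is to derive the stated inequality directly from the definition of $2$-uniform convexity by applying it to a cleverly chosen segment other than $[y,z]$. The key move is not to apply the defining inequality to the segment $[y,z]$ (which only recovers the $-\frac{k}{8}d^2(y,z)$ bound we already have), but instead to apply it to the segment $[x,m]$, so that a $-\frac{k}{8}d^2(x,m)$ term appears on the right-hand side and can eventually be transferred to the left.

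Concretely, introduce $m'$, the midpoint of $[x,m]$, and apply the defining inequality of $2$-uniform convexity to $[x,m]$ twice, once with auxiliary point $y$ and once with $z$:
\[
  d^2(y,m') \leq \frac{1}{2}d^2(x,y) + \frac{1}{2}d^2(y,m) - \frac{k}{8}d^2(x,m),
\]
\[
  d^2(z,m') \leq \frac{1}{2}d^2(x,z) + \frac{1}{2}d^2(z,m) - \frac{k}{8}d^2(x,m).
\]
Since $m$ is the midpoint of $y$ and $z$, both $d^2(y,m)$ and $d^2(z,m)$ equal $\frac{1}{4}d^2(y,z)$, and adding the two inequalities produces an upper estimate for $d^2(y,m') + d^2(z,m')$ in which the term $-\frac{k}{4}d^2(x,m)$ appears, alongside $\frac{1}{2}d^2(x,y) + \frac{1}{2}d^2(x,z) + \frac{1}{4}d^2(y,z)$.

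To close the argument, I would combine this with the lower bound $d^2(y,m') + d^2(z,m') \geq \frac{1}{2}d^2(y,z)$, which follows from the triangle inequality $d(y,z) \leq d(y,m') + d(m',z)$ together with the elementary estimate $(a+b)^2 \leq 2(a^2+b^2)$. The $d^2(y,z)$ contributions on the two sides then cancel partially, and isolating $d^2(x,m)$ gives exactly the claimed bound. The only step demanding a genuine idea is the choice to apply $2$-uniform convexity to $[x,m]$ rather than $[y,z]$; once this configuration is fixed, the remainder is routine algebra with no serious obstacle.
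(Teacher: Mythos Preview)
Your proof is correct: applying the defining inequality of $2$-uniform convexity to the segment $[x,m]$ twice (with auxiliary points $y$ and $z$), summing, and combining with the elementary lower bound $d^2(y,m')+d^2(z,m')\geq \tfrac12 d^2(y,z)$ yields exactly the stated inequality after rearrangement. Note, however, that the paper does not supply its own proof of this lemma; it is simply cited from Ohta \cite{oo} and invoked as a tool in the proof of the surrounding proposition, so there is no in-paper argument to compare yours against.
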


Now we prove the above proposition
\begin{proof}
For the first implication, let $m$ denote the midpoint of the segment $[x, y]$ for $x, y \in K$,
and let $m'$ denote the midpoint of the segment $[T (x), T (y)]$. From the lemma, we have
\begin{eqnarray*}
  d^2(m',T(m))&\leq& \frac{4}{k}\{\frac{1}{2}d^2(T(m),T(x))+\frac{1}{2}d^2(T(m),T(y))-\frac{1}{4}d^2(T(x),T(y))\}\\
              &\leq& \frac{1}{k}(d^2(x,y)-d^2(T(x),T(y)))\\
              &\leq& \frac{2 diam(K)}{k}(d(x,y)-d(T(x),T(y)))
\end{eqnarray*}
Thus it suffices to take $\gamma(t)=\frac{k}{2 diam(K)}t^2$ to complete the first implication.

In order to prove the second implication, we have first
\begin{eqnarray*}
  J_T (m)=d(m,T(m))&\leq& d(m,m')+d(m',T(m))\\
                   &\leq& d(m,m')+\gamma^{-1}(|d(x,y)-d(T(x),T(y))|)\\
                   &\leq& d(m.m')+\gamma^{-1}(d(x,T(x))+d(y,T(y))).
\end{eqnarray*}
Choose $p$ as the midpoint of the segment $[m,m']$, applying the $2$-uniformly convexity, we have
\[
  \frac{k}{8}d^2(m,m') \leq \frac{1}{2}d^2(x,m)+\frac{1}{2}d^2(x,m')-d^2(x,p),
\]
similarly
\[
  \frac{k}{8}d^2(m,m') \leq \frac{1}{2}d^2(T(x),m)+\frac{1}{2}d^2(T(x),m')-d^2(T(x),p),
\]
\[
  \frac{k}{8}d^2(m,m') \leq \frac{1}{2}d^2(y,m)+\frac{1}{2}d^2(y,m')-d^2(y,p),
\]
\[
  \frac{k}{8}d^2(m,m') \leq \frac{1}{2}d^2(T(y),m)+\frac{1}{2}d^2(T(y),m')-d^2(T(y),p),
\]
Since $d^2(x,p)+d^2(y,p)\geq d^2(x,m)+d^2(y,m)$ and $d^2(T(x),p)+d^2(T(y),p)\geq d^2(T(x),m')+d^2(T(y),m')$,
we could obtain the following
\begin{eqnarray*}
  \frac{k}{2}d^2(m,m') &\leq& \frac{1}{2}d^2(x,m')-\frac{1}{2}d^2(T(x),m')+\frac{1}{2}d^2(T(x),m)-\frac{1}{2}d^2(x,m)\\
                      &&+\frac{1}{2}d^2(y,m')-\frac{1}{2}d^2(T(y),m')+\frac{1}{2}d^2(T(y),m)-\frac{1}{2}d^2(y,m)\\
                     &\leq& 2D(d(x,T(x))+d(y,T(y))
\end{eqnarray*}
where $D=diam(K)$. Thus
\begin{eqnarray*}
  J_T (m)&\leq& \sqrt{\frac{4D}{k}(d(x,T(x))+d(y,T(y))}+\gamma^{-1}(d(x,T(x))+d(y,T(y)))\\
         &\leq& \alpha(\max\{J_T (x), J_T (y)\}),
\end{eqnarray*}
where $\alpha(t)=\sqrt{\frac{8D}{k}t}+\gamma^{-1}(2t)$.

The third implication is immediate.
\end{proof}

We finish this section with the equivalent result of Theorem 3.14 in \cite{kp} and \cite{el} for $p$-uniformly convex spaces.
\begin{theorem}
Let $K$ be a bounded closed convex subset of $X$ a complete $p$-uniformly convex space, and let
$T : K \to X$ be continuous and of convex type. Suppose
\[
  \inf\{d(x, T(x)) : x \in K\} = 0.
\]
Then $T$ has a fixed point in $K$.
\end{theorem}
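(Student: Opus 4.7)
The plan follows the standard $\Delta$-convergence scheme for fixed-point results: produce an approximate fixed point sequence, extract a $\Delta$-convergent subsequence, and argue the $\Delta$-limit is fixed by $T$. Since $\inf\{d(x,T(x)):x\in K\}=0$, I pick $(x_n)\subset K$ with $J_T(x_n):=d(x_n,T(x_n))\to 0$. As $K$ is bounded, the corollary to Proposition \ref{exist} (every bounded sequence in a complete $p$-uniformly convex space has a $\Delta$-convergent subsequence) lets me pass to a subsequence, still denoted $(x_n)$, with $\Delta$-$\lim x_n = z$; since $K$ is closed and convex, Proposition \ref{exist} identifies $z$ with the unique asymptotic center of $(x_n)$ with respect to $K$, so in particular $z\in K$.

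The heart of the argument is to show $T(z)=z$. The difficulty is that continuity of $T$ is a strong-topological hypothesis while $x_n\to z$ only $\Delta$-topologically, so one cannot apply $T$ to the limit directly. My plan is to use the convex type hypothesis to build, out of $(x_n)$, an approximate fixed point sequence $(y_k)\subset K$ that in fact converges strongly to $z$; then continuity of $T$ yields $d(z,T(z))=\lim_k d(y_k,T(y_k))=0$. To build $(y_k)$, I would iteratively form midpoints of pairs of terms of $(x_n)$ (and of previously formed midpoints): convex type guarantees that at each stage of this averaging process the approximate-fixed-point property $J_T\to 0$ is preserved, while the $p$-uniform convexity inequality
\[
  d^p(y,m)\leq \tfrac{1}{2}d^p(y,x)+\tfrac{1}{2}d^p(y,x')-\tfrac{k}{8}d^p(x,x')
\]
forces the iterated midpoints to concentrate around the unique asymptotic center $z$, in the spirit of Mazur-type convex combination arguments in Banach spaces.

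The main obstacle is precisely this strong-convergence step: $\Delta$-convergence is genuinely weaker than strong convergence on bounded sets, so one has to balance carefully the preservation of approximate fixed points under midpoint averaging (from convex type) against the quantitative contraction $-\tfrac{k}{8}d^p(x,x')$ in the $p$-uniform convexity inequality, using the uniqueness of the asymptotic center from Proposition \ref{exist} to pin down the strong limit as $z$. This step mirrors the corresponding argument in the $\CAT(0)$ proof of Theorem 3.14 in \cite{kp}, with the parallelogram law there replaced by the above $p$-uniform convexity inequality; once it is in place, continuity of $T$ closes the argument.
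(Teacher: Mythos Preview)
Your plan and the paper's proof diverge at the key step. The paper does \emph{not} use $\Delta$-convergence at all. Instead it fixes an arbitrary base point $x_0\in X$, sets
\[
  \rho_0=\inf\bigl\{\rho>0:\ \inf\{d(x,T(x)):x\in B(x_0,\rho)\cap K\}=0\bigr\},
\]
and then chooses an approximate fixed point sequence $(x_n)\subset K$ with $d(x_n,T(x_n))\to 0$ and $d(x_0,x_n)\to\rho_0$. The whole argument is to show this particular sequence is Cauchy: if not, one finds subsequences $(u_k),(v_k)$ with $d(u_k,v_k)\geq\varepsilon$, takes their midpoints $m_k$, applies the $p$-uniform convexity inequality with apex $x_0$ to get $d(x_0,m_k)\leq\bar\rho<\rho_0$ for large $k$, and invokes convex type to get $d(m_k,T(m_k))\to 0$. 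This contradicts the minimality of $\rho_0$. Continuity of $T$ then fixes the limit.

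Your proposal, by contrast, first passes to a $\Delta$-limit $z$ and then tries to manufacture a \emph{strongly} convergent approximate fixed point sequence $(y_k)\to z$ by iterated midpoint averaging. This is exactly the hard direction you flag, and the sketch does not close it. The convex type hypothesis does preserve $J_T\to 0$ under taking midpoints, and the $p$-uniform convexity inequality does say a midpoint is strictly closer to any fixed apex than the worse of the two endpoints; but neither of these, nor their iteration, forces the resulting points to converge strongly to the specific asymptotic center $z$. There is no Mazur-type lemma available here: $\Delta$-convergence in $p$-uniformly convex (or even $\CAT(0)$) spaces does not in general upgrade to strong convergence along geodesic convex combinations, and the argument in \cite{kp} for the $\CAT(0)$ case already proceeds via the minimizing-radius device, not via $\Delta$-limits. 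What makes the paper's argument work is that the sequence is chosen from the outset to minimize distance to a \emph{fixed} point $x_0$; the midpoint then violates that minimality, and one never needs to identify a $\Delta$-limit or prove anything converges to it. Your scheme is missing precisely this organizing minimization, and without it the ``concentration around $z$'' step is a genuine gap.
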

\begin{proof}
Let $x_0 \in X$ be fixed and define
\[
  \rho_0=\inf \{\rho>0: \inf \{d(x, T(x)) | x \in B(x_0,\rho) \cap K\}=0\}.
\]
Since $K$ is bounded, $\rho_0<\infty$. Moreover if $\rho_0=0$ then $x_0 \in K$ and $T(x_0)=x_0$ by the continuity of $T$. So assume that
$\rho_0>0$. Choose $(x_n) \subset K$ such that $d(x_n,T(x_n)) \to 0$ and $d(x_n,x_0) \to \rho_0$. It suffices to show that $(x_n)$ is convergent to
prove the theorem. If not, then there exists a $\varepsilon>0$ and subsequences $(u_k)$ and $(v_k)$ of $(x_n)$ such that $d(u_k,v_k)\geq \varepsilon$. Again, if necessary we may suppose $d(u_k,x_0)\leq \rho_0+\frac{1}{k}$ and $d(v_k,x_0)\leq \rho_0+\frac{1}{k}$. Denote $m_k$ as the midpoint of the segment $[u_k,v_k]$. Then applying the $p$-uniformly convexity to triangle $\triangle(x_0,u_k,v_k)$ we have
\begin{eqnarray*}
  d^p(x_0,m_k)&\leq& \frac{1}{2}d^p(x_0,u_k)+\frac{1}{2}d^p(x_0,v_k)-\frac{C}{8}d^p(u_k,v_k)\\
              &\leq& (\rho_0+\frac{1}{k})^p-\frac{C}{8}\varepsilon^p.
\end{eqnarray*}
We consider $k$ large enough, such that
\[
  d(x_0,m_k)\leq \bar{\rho}<\rho_0.
\]
On the other hand, since $T$ is of convex type, $\lim_{k\to \infty}d(m_k,T(m_k))=0$. This contradicts the definition of $\rho_0$.
\end{proof}


\bigskip

\begin{tabbing}

Renlong Miao\\

Institut f\"ur Mathematik, \\

Universit\"at Z\"urich,\\
 Winterthurer Strasse 190, \\

CH-8057 Z\"urich, Switzerland\\

{\tt renlong.miao@math.uzh.ch}\

\end{tabbing}

\end{document}